\numberwithin{equation}{section}
\newtheorem{thm}{Theorem}[section]
\newtheorem{rem}[thm]{Remark}
\newtheorem{prop}[thm]{Proposition}
\newtheorem{lem}[thm]{Lemma}
\newcommand{\R}{\mathbb{R}}
\newcommand{\Z}{\mathbb{Z}}
\newcommand{\al}{\alpha}
\newcommand{\xs}{\overline{x}}
\renewcommand{\theta}{\vartheta}
\renewcommand{\epsilon}{\varepsilon}
\newcommand{\ep}{\epsilon}
\newcommand{\I}{\mathcal{I}_s}
\newcommand{\ueu}{u_{\epsilon,1}}
\newcommand{\ued}{u_{\epsilon,2}}
\newcommand{\cs}{\overline{c}}
\newcommand{\vs}{\overline{v}}
\renewcommand{\leq}{\leqslant}
\renewcommand{\le}{\leqslant}
\renewcommand{\geq}{\geqslant}
\renewcommand{\ge}{\geqslant}
\newcommand{\beq}{\begin{equation}}
\newcommand{\eeq}{\end{equation}}
\newcommand{\beqs}{\begin{equation*}}
\newcommand{\eeqs}{\end{equation*}}
\newcommand{\beqa}{\begin{eqnarray}}
\newcommand{\eeqa}{\end{eqnarray}}
\newcommand{\beqas}{\begin{eqnarray*}}
\newcommand{\eeqas}{\end{eqnarray*}}
\title[Crystal dislocations with different orientations 
and collisions]{Crystal dislocations with \\different orientations
and collisions}
\author{Stefania Patrizi and Enrico Valdinoci}
\thanks{The authors have been supported by the
ERC grant 277749 ``EPSILON Elliptic
Pde's and Symmetry of Interfaces and Layers for Odd Nonlinearities''}
\address{Weierstra{\ss} Institut f{\"u}r Angewandte und Stochastik,
Mohrenstrasse 39, D-10117 Berlin, Germany}
\email{Stefania.Patrizi@wias-berlin.de} 
\email{Enrico.Valdinoci@wias-berlin.de}
\subjclass[2010]{82D25, 35R09, 74E15, 35R11, 47G20.}
\keywords{Peierls-Nabarro model, nonlocal integro-differential equations,
dislocation dynamics, attractive/repulsive potentials, collisions.}
\begin{document}

\maketitle

\noindent{\footnotesize{\sc Abstract.}
{\sf
We study a parabolic differential equation whose solution
represents the atom dislocation in a crystal for
a general type of Peierls-Nabarro model with possibly long range
interactions and an external stress. Differently
from the previous literature, we treat here the case in which
such dislocation is not the superpositions of transitions
all occurring with the same orientations (i.e. opposite orientations
are allowed as well).

We show that, at a long time scale, and at a macroscopic space scale,
the dislocations have the tendency to concentrate as pure jumps
at points which evolve in time, driven by the external
stress and by a singular potential. Due to differences in the
dislocation orientations, these points may collide in finite time.

More precisely, we consider the evolutionary equation
$$ (v_\ep)_t=\displaystyle\frac{1}{\ep}\left(
\I v_\ep-\displaystyle\frac{1}{\ep^{2s}}W'(v_\ep)+\sigma(t,x)\right),$$
where~$v_\ep=v_\ep(t,x)$ is the atom dislocation fuction at time~$t>0$
at the point~$x\in\R$, $\I$ is an integro-differential
operator of order~$2s\in(0,2)$, $W$ is a periodic potential,
$\sigma$ is an external stress
and~$\ep>0$ is a small parameter that takes into account the small
periodicity scale of the crystal.

We suppose that~$v_\ep(0,x)$ is the superposition
of~$N-K$ transition layers in the positive direction and~$K$
in the negative one (with~$K\in\{0,\dots,N\}$);
more precisely, we fix points~$x_1^0<\dots<x_N^0$ and we
take
$$ v_\ep(0,x)= \displaystyle\frac{\ep^{2s}}{W''(0)}
\sigma(0,x)+\displaystyle\sum_{i=1}^N u\left(\zeta_i
\displaystyle\frac{x-x_i^0}{\ep}\right).$$
Here~$\zeta_i$ is either~$-1$ or~$1$, depending
on the orientation of the transition layer~$u$, which
in turn solves the stationary equation~$\I u=W'(u)$.

We show that our problem possesses a unique solution
and that, as~$\ep\to0^+$, it approaches the sum of Heaviside functions~$H$
with different orientations centered at points~$x_i(t)$, namely
$$ \sum_{i=1}^N H(\zeta_i(x-x_i(t))).$$
The point~$x_i$ evolves in time from~$x_i^0$, being subject to the external
stress and a singular potential, which may be either attractive or repulsive,
according to the different orientation of the transitions: more precisely,
the speed~$\dot x_i$ is proportional to
$$ \sum_{j\neq i}\zeta_i\zeta_j
\frac{x_i-x_j}{2s |x_i-x_j|^{1+2s}}-\zeta_i\sigma(t,x_i).$$
The evolution of such dynamical system may lead to collisions in finite
time. We give a detailed description of such collisions when~$N=2,3$
and we show that the solution itself keeps track of such collisions: indeed,
at the collision time~$T_c$ the two opposite dislocations
have the tendency to annihilate each other and make
the dislocation vanish, but only outside
the collision point~$x_c$, according to the formulas 
\begin{eqnarray*}
&&\displaystyle\lim_{t\rightarrow T_c^-}\lim_{\ep\rightarrow 0^+}v_\ep(t,x)=0
\quad {\mbox{ when $x\ne x_c$,}}\\{\mbox{and }}&&
\displaystyle\limsup_{t\rightarrow T_c^-\atop\ep\rightarrow 0^+} v_\ep(t,x_c)\geq1.\end{eqnarray*}
We also study some specific cases of~$N$ dislocation layers, namely when
two dislocations are initially very close and when the dislocations
are alternate.

To the best of our knowledge, the results obtained are new even 
in the model case~$s=1/2$.
}}
\bigskip

\tableofcontents

\section{Introduction}

The goal of this paper is to study an evolutionary partial-integro-differential equation
and a system of ordinary differential equations that arise in
the Peierls-Nabarro model for atoms dislocation in crystals.

We refer to~\cite{Nab97} for a survey of the
Peierls-Nabarro model. See also Section~2 in~\cite{dpv}
for some basic physical derivation. 

The main goal of the evolutionary equation associated to
the Peierls-Nabarro model is to study the asymptotic behavior
of the solution~$v_\ep$, which represents the atom dislocation function,
in terms of~$\ep$, which in turn
represents the size of the crystal scale.
A suitable parabolic scaling is involved in the equation, and so
the asymptotics as~$\ep\to0^+$ corresponds simultaneously
to the long time and macroscopic space scale behavior.\medskip

Roughly speaking, in this paper we will consider initial configurations
in which the dislocation transitions occurs at some given points.
Differently from the existing literature, the initial dislocations
are not assumed to have all the same orientation. \medskip

We will show that, at a long time and macroscopic scale range,
the solution will behave as the superposition of sharp interfaces.

These interfaces move in time according to an external stress
and an interaction potential. As a main novelty with respect
to the existing literature, we will show that in this case
the potential has two opposite tendences, i.e. it
is repulsive among dislocations with the same orientations
and attractive among dislocations with opposite orientations. \medskip

In configurations in which the attractive feature of the potential
prevails, the dislocation with opposite orientations may collide
one with the other. Therefore we also give some explicit 
results about collisions in concrete cases.\medskip

Let us now formally describe the mathematical
framework that we deal with.
We consider the problem 
\beq\label{vepeq}\begin{cases}
(v_\ep)_t=\displaystyle\frac{1}{\ep}\left(
\I v_\ep-\displaystyle\frac{1}{\ep^{2s}}W'(v_\ep)+\sigma(t,x)\right)&\text{in }(0,+\infty)\times\R\\
v_\ep(0,\cdot)=v_\ep^0&\text{on }\R,
\end{cases}\eeq
where $\ep>0$ is a small scale parameter, $W$ is a periodic potential and  $\I$ is the so-called fractional Laplacian 
 of any order $2s\in(0,2)$.  Precisely, given $\varphi\in C^2(\R^N)\cap L^\infty(\R^N)$, let us define 
\beq\label{slapla} \I[\varphi](x):=PV\displaystyle\int_{\R^N}\displaystyle\frac{\varphi(x+y)-\varphi(x)}{|y|^{N+2s}}dy,\eeq
where $PV$ stands for the principal value of the integral. We refer to \cite{s} and   \cite{dnpv} for a basic introduction to the fractional Laplace operator. 
On the potential $W$ we assume 
\begin{equation}\label{W}
\begin{cases}W\in C^{3,\alpha}(\R)& \text{for some }0<\alpha<1\\
W(v+1)=W(v)& \text{for any } v\in\R\\
W=0& \text{on }\Z\\
W>0 & \text{on }\R\setminus\Z\\
W''(0)>0.\\
\end{cases}
\end{equation}
The function $\sigma$ satisfies: 

\begin{equation}\label{sigmaassump}
\begin{cases}
\sigma \in BUC([0,+\infty)\times\R)\quad\text{and for some }M>0\text{ and }\alpha\in(s,1)\\
\|\sigma_x\|_{L^\infty([0,+\infty)\times\R)}+\|\sigma_t\|_{L^\infty([0,+\infty)\times\R)}\leq M\\
|\sigma_x(t,x+h)-\sigma_x(t,x)|\leq M|h|^\alpha,\quad\text{for every }x,h\in\R \text{ and }t\in[0,+\infty).
\end{cases}
\end{equation}
We assume the initial condition in \eqref{vepeq} to be a superposition of  transition layers. Precisely, let us introduce the so-called basic layer solution $u$ associated to 
$\I$, that  is the solution  of 
\begin{equation}\label{u}
\begin{cases}\I(u)=W'(u)&\text{in}\quad \R\\
u'>0&\text{in}\quad \R\\
\displaystyle\lim_{x\rightarrow-\infty}u(x)=0,\quad\displaystyle\lim_{x\rightarrow+\infty}u(x)=1,\quad u(0)=\displaystyle\frac{1}{2}.
\end{cases}
\end{equation}
The existence of a unique solution 
of \eqref{u} is proven in \cite{cs}. The name layer solution is motivated by the fact that $u$ approaches the limits 0 and 1 at $\pm\infty$. 
Asymptotic estimates  on the decay of $u$ are proven in \cite{psv}, finer estimates are given in \cite{dpv} and  \cite{dfv} respectively when $s\in\left[\frac{1}{2},1\right)$ 
and $s\in\left(0,\frac{1}{2}\right)$. The case $s=\frac{1}{2}$ was already treated in \cite{gonzalezmonneau}. 

Given $x_1^0<x_2^0<...<x_N^0$, 
 we say that the function $u\left(\frac{x-x_i^0}{\ep}\right)$ 
is a transition layer centered at $x_i^0$ and positively oriented. 
Similarly, we say that the function
$u\left(\frac{x_i^0-x}{\ep}\right)-1$
is a transition layer centered at $x_i^0$ and negatively oriented.

Notice that the positively oriented transition layer connects
the ``rest states''~$0$ and~$1$, while the negatively oriented one
connects~$0$ with~$-1$.\medskip

We consider as initial condition in \eqref{vepeq} the state obtained by superposing 
$N$ copies of the transition layer, centered at $x_1^0,..., x_N^0$, $N-K$ of them positively oriented and the remaining $K$ negative oriented, that is 
\beq\label{vep0}
v_\ep^0(x)=\displaystyle\frac{\ep^{2s}}{\beta}
\sigma(0,x)+\displaystyle\sum_{i=1}^N u\left(\zeta_i
\displaystyle\frac{x-x_i^0}{\ep}\right)-K,
\eeq
where $\zeta_1,...,\zeta_N\in\{-1,1\}$, $\displaystyle\sum_{i=1}^N(\zeta_i)^-=K$, $0\leq K\leq N$ and
\beq\label{beta}\beta:=W''(0)>0.\eeq

Let us introduce the solution $(x_i(t))_{i=1,...,N}$ to the system 
\beq\label{dynamicalsysNintro}\begin{cases} \dot{x}_i=\gamma\left(
\displaystyle\sum_{j\neq i}\zeta_i\zeta_j 
\displaystyle\frac{x_i-x_j}{2s |x_i-x_j|^{1+2s}}-\zeta_i\sigma(t,x_i)\right)&\text{in }(0,T_c)\\
 x_i(0)=x_i^0,
\end{cases}\eeq
where \beq\label{gamma}\gamma:=\left(\displaystyle\int_\R (u'(x))^2 dx\right)^{-1},\eeq
with $u$ solution of \eqref{u} and $(0,T_c)$ is the maximal interval where the system \eqref{dynamicalsysNintro} is well defined, i.e. where $x_i\neq x_j$ for any $i\neq j$. 
Therefore, $0<T_c\leq+\infty$ is the first time when a collision between two particles occurs, more precisely $T_c$ is such that: there exist $i_0,j_0$ with $i_0\neq j_0$ such that 
$x_{i_0}(T_c)=x_{j_0}(T_c)$ and $x_i(t)\neq x_j(t)$ for any $t\in[0,T_c)$ and any $i,j$. 

We remark that~\eqref{dynamicalsysNintro} is a gradient system, i.e.
it can be written as
$$\dot x_i (t)=-\partial_i V\big(t,x_1(t),\ldots,x_N(t)\big),$$ with
\begin{eqnarray*}
&&V(t,x_1,\ldots,x_N):=V_0(x_1,\ldots,x_n)
+\displaystyle\sum_{i=1}^N \zeta_i
\Sigma(t,x_i) ,\\
&& V_0(x_1,\ldots,x_n):=\left\{
\begin{matrix}
\displaystyle\frac{\gamma}{2s\,(2s-1)}
\displaystyle\sum_{1\le i\ne j\le N}\zeta_i\zeta_j |x_j-x_i|^{1-2s}
& {\mbox{ if }} s\neq 1/2,\\
-{\gamma}\displaystyle\sum_{1\le i\ne j\le N}\zeta_i\zeta_j\log |x_j-x_i|
& {\mbox{ if }} s=1/2,
\end{matrix}\right.
\\ {\mbox{and }}&&\Sigma(t,r):=\gamma\int_0^r \sigma(t,y)\,dy.
\end{eqnarray*}
In particular, if the external stress is independent of the time,
then the potential~$V=V_0$
is authonomous and the map~$t\mapsto V_0\big(x_1(t),\ldots,x_N(t)\big)$
is nonincreasing in time.

We also remark that the behavior of~$V_0$ at infinity changes
dramatically when the fractional parameter~$s$ crosses the threshold~$1/2$
(this is in agreement with the strongly nonlocal interactions
expected when~$s<1/2$, see~\cite{dfv}). Nevertheless the convexity
of the functions
$(0,+\infty)\ni r\mapsto r^{1-2s}/(2s-1)$ (when~$s\ne1/2$)
and~$-\log r$ (when~$s=1/2$), which appear in the definition of~$V_0$, holds
for all~$s\in(0,1)$.

Finally, to state our result, we recall that the (upper and lower)
semi-continuous envelopes of a function $v$ are defined as
\beqs v^*(t,x):=\displaystyle\limsup_{(t',x')\rightarrow (t,x)} v(t',x')\eeqs
and 
\beqs v_*(t,x):=\displaystyle\liminf_{(t',x')\rightarrow (t,x)} v(t',x').\eeqs
Our main result is the following:
\begin{thm}\label{mainthm} Assume that  \eqref{W}, \eqref{sigmaassump} and \eqref{vep0} hold, and let
\begin{equation}\label{1.19bis}
v_0(t,x)=\displaystyle\sum_{i=1}^N H(\zeta_i(x-x_i(t)))-K,\end{equation}
where $H$ is the Heaviside function and $(x_i(t))_{i=1,...,N}$ is the solution to \eqref{dynamicalsysNintro}. 

Then, for every $\ep>0$ there exists a unique solution 
$v_\ep$ to  \eqref{vepeq}. Furthermore, as $\ep\rightarrow 0^+$, the solution $v_\ep$  exhibits the following asymptotic behavior: 
\beq\label{limsupv^ep}
\displaystyle\limsup_{(t',x')\rightarrow (t,x) \atop \ep\rightarrow 0^+} v_\ep(t',x')\leq (v_0)^*(t,x)\eeq
and 
\beq\label{liminfv^ep}
\displaystyle\liminf_{(t',x')\rightarrow (t,x) \atop \ep\rightarrow 0^+} v_\ep(t',x')\geq (v_0)_*(t,x),\eeq
for any $(t,x)\in [0,T_c)\times\R$.
\end{thm}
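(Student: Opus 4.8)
The plan is to establish the well-posedness of $v_\ep$ for fixed $\ep>0$ and the asymptotics as $\ep\to0^+$ separately, the latter via a barrier argument whose leading-order correction is governed by the dynamical system~\eqref{dynamicalsysNintro}. For fixed $\ep>0$, equation~\eqref{vepeq} is a nonlocal parabolic equation: the operator $\frac1\ep\I$ is degenerate elliptic, the reaction $-\frac1{\ep^{1+2s}}W'(\cdot)$ is Lipschitz by~\eqref{W}, the forcing $\frac1\ep\sigma$ is bounded and uniformly continuous by~\eqref{sigmaassump}, and the initial datum $v_\ep^0$ in~\eqref{vep0} is bounded and Lipschitz. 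Hence the standard viscosity-solution theory applies: a comparison principle holds, sufficiently large (resp. small) constants are super- (resp. sub-) solutions so that solutions stay bounded, and Perron's method produces a unique global continuous viscosity solution $v_\ep$; uniqueness follows again from comparison, which is also the device used below to trap $v_\ep$ between barriers.

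For the asymptotics, set $\overline v(t,x):=\limsup_{\ep\to0^+,\,(t',x')\to(t,x)}v_\ep(t',x')$ and $\underline v(t,x):=\liminf_{\ep\to0^+,\,(t',x')\to(t,x)}v_\ep(t',x')$; then \eqref{limsupv^ep} and \eqref{liminfv^ep} amount to $\overline v\le(v_0)^*$ and $\underline v\ge(v_0)_*$ on $[0,T_c)\times\R$ (read for all times when $T_c=+\infty$). The substitution $v\mapsto-v$, together with a relabelling of the orientations $\zeta_i$, exchanges these two inequalities, so it suffices to prove $\overline v\le(v_0)^*$. Fixing $T<T_c$ and a small $\delta>0$, we construct a supersolution $w_\ep$ of~\eqref{vepeq} on $[0,T]\times\R$ with $w_\ep(0,\cdot)\ge v_\ep^0$, of the form
\[
w_\ep(t,x)=\frac{\ep^{2s}}{\beta}\,\sigma(t,x)+\sum_{i=1}^N u\!\left(\zeta_i\frac{x-\tilde x_i(t)}{\ep}\right)-K+\ep^{2s}\,\Phi_\ep(t,x),
\]
where $\Phi_\ep$ is a bounded corrector with fast-scale structure near each $\tilde x_i$ (built from solutions of linearized equations $\I\psi-W''(u)\psi=g$ associated with each layer, weighted by the local value of the stress, plus a small positive constant, the latter guaranteeing $w_\ep(0,\cdot)\ge v_\ep^0$), and $(\tilde x_i(t))_i$ solves~\eqref{dynamicalsysNintro} with the interaction force perturbed by a favourable term of size $O(\delta)$. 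A Gronwall estimate against the vector field of~\eqref{dynamicalsysNintro}, which is locally Lipschitz as long as the particles stay separated, shows that $\tilde x_i$ is defined on $[0,T]$, remains within $O(\delta)$ of $x_i$, and keeps the particles separated by some $d=d(\delta,T)>0$.

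The core of the proof is to verify that $w_\ep$ is a supersolution. By linearity and evenness of $\I$, together with~\eqref{u}, $\I\big[u(\zeta_i\frac{\cdot-\tilde x_i}{\ep})\big](x)=\ep^{-2s}W'\!\big(u(\zeta_i\frac{x-\tilde x_i}{\ep})\big)$; the periodicity of $W$ from~\eqref{W} lets us replace $W'$ of the entire sum by $W'$ of a single layer up to a linear error, and, since $W''$ equals $\beta$ at every integer, the term $\frac{\ep^{2s}}{\beta}\sigma$ compensates the external stress at leading order, the remaining mismatch being what $\Phi_\ep$ is designed to absorb. After these reductions, near a point $\tilde x_i$ the residual $\ep\,\partial_t w_\ep-\I w_\ep+\frac1{\ep^{2s}}W'(w_\ep)-\sigma$ organizes into: a term proportional to $u'\!\big(\zeta_i\frac{x-\tilde x_i}{\ep}\big)$, of size $O(1)$ and with coefficient $\propto-\zeta_i\big(\dot{\tilde x}_i-\gamma\big(\sum_{j\ne i}\zeta_i\zeta_j\frac{\tilde x_i-\tilde x_j}{2s|\tilde x_i-\tilde x_j|^{1+2s}}-\zeta_i\sigma(t,\tilde x_i)\big)\big)$, which is $O(\delta)$-favourable by the choice of $\tilde x_i$; a part orthogonal to $u'$, cancelled by $\Phi_\ep$; and genuinely lower-order errors of size $O(\ep^{\alpha-s})$, controlled because $\alpha>s$ in~\eqref{sigmaassump}. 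One also treats separately a neighbourhood of each $\tilde x_i$ (where $u'$ is bounded below at the fast scale, so the $O(\delta)$ margin dominates) and the complementary region (where $w_\ep$ exceeds an integer by a controlled positive amount and $W'$ has the right sign). Extracting the interaction coefficient is where the sharp second-order decay estimates for $u$ from~\cite{psv,dpv,dfv,gonzalezmonneau} come in, through the tail overlap $\ep^{-2s}\big(u(\zeta_j\frac{x-\tilde x_j}{\ep})-H(\zeta_j(x-\tilde x_j))\big)$, of order $|\tilde x_i-\tilde x_j|^{-2s}$ at $\tilde x_i$; the signs of the products $\zeta_i\zeta_j$ are precisely what make like-oriented pairs repel and opposite-oriented pairs attract, while $\gamma=\big(\int_\R(u')^2\big)^{-1}$ from~\eqref{gamma} appears as the Fredholm normalization of the operator $\phi\mapsto\I\phi-W''(u)\phi$ (whose kernel is spanned by $u'$) and combines with the tail constant to produce exactly the force of~\eqref{dynamicalsysNintro}. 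Choosing $\Phi_\ep$ and the $O(\delta)$ perturbation so that the $u'$-coefficient is $\ge0$ and the lower-order errors are dominated, $w_\ep$ is a strict supersolution; since $w_\ep(0,\cdot)\ge v_\ep^0$, comparison yields $v_\ep\le w_\ep$ on $[0,T]\times\R$. As $\ep\to0^+$, $w_\ep(t,x)\to\sum_i H(\zeta_i(x-\tilde x_i(t)))-K$ for $x\notin\{\tilde x_1(t),\dots,\tilde x_N(t)\}$, and letting $\delta\to0^+$ (so that $\tilde x_i\to x_i$ uniformly on $[0,T]$) gives $\overline v\le(v_0)^*$; the symmetric construction of a subsolution gives $\underline v\ge(v_0)_*$.

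The main obstacle is this last verification: projecting the $O(1)$ part of the residual onto $u'$ so as to read off, uniformly on $[0,T]$, the exact force of~\eqref{dynamicalsysNintro} out of the nonlocal tails of the superposed layers, and building the corrector $\Phi_\ep$ with the required decay while controlling the several error terms. Since the statement is confined to $[0,T_c)$, one always works at a positive distance from every collision, so $d(\delta,T)>0$ stays uniform, the layers never interact singularly, and no barrier degenerates.
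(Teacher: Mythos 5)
Your proposal follows essentially the same route as the paper: the same ansatz for the barrier (superposition of layers centered at trajectories of a $\delta$-perturbed version of~\eqref{dynamicalsysNintro}, plus the $\ep^{2s}\sigma/\beta$ term, a $\psi$-type corrector solving the linearized equation, and a small positive $\delta$-bump to dominate the errors and order the initial data), the same comparison-principle trapping, the same near/far case analysis, and the same use of the decay estimate for $u-H$ to extract the signed interaction force. The only cosmetic difference is that you derive the $\liminf$ bound from the $\limsup$ bound via the symmetry $v\mapsto -v$, $W\mapsto W(-\cdot)$, $\zeta_i\mapsto-\zeta_i$, whereas the paper constructs the subsolution barrier directly; both are fine.
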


We remark that equation~\eqref{vepeq} is not changed by adding an integer
constant to the solution, so subtracting $K$ in formula~\eqref{1.19bis}
(as well as in~\eqref{vep0} for consistency) is clearly unessential.
We chose this normalization in order to have that
$$ \lim_{x\to-\infty} v_0(t,x)=0 \ {\mbox{ and }} \
\lim_{x\to+\infty} v_0(t,x)=N-K.$$
That is, the dislocation function~$v_0$ is normalized
to start with value~$0$ at~$-\infty$. In this way, its value at~$+\infty$
is equal to the number of  the dislocations that are positive oriented.\medskip

When~$K=0$ (i.e. when all the dislocation are oriented
in the same direction), the result in Theorem~\ref{mainthm}
has been proven in~\cite{gonzalezmonneau, dpv, dfv}, so the novelty
of Theorem~\ref{mainthm} consists in treating the general case
in which the dislocations occur in possibly different orientation.
\medskip

The long time behavior of our problem in this case
is very different from the case of positive oriented transitions.
Indeed, in such situation, system~\eqref{dynamicalsysNintro} is
driven by a repulsive potential, i.e. the dislocations have the
tendency to repell each other, and the solution of~\eqref{dynamicalsysNintro}
is defined for all the times, see~\cite{FIM09}.
\medskip

On the other hand, when the dislocations do not have all the same
orientations, the potential in~\eqref{dynamicalsysNintro}
has two types of behaviors: it acts as a {\em repulsive} potential
for particles with the {\em same} orientation, and as an {\em attractive}
potential for particles with {\em opposite} orientations.
\medskip

This dichotomy between the repulsive and attractive properties
of the potential may lead to collisions, i.e. solutions
of~\eqref{dynamicalsysNintro} may cease to exist in a finite time,
due to the vanishing of the denominator. As far as we know,
the present literature does not offer a complete study
of system~\eqref{dynamicalsysNintro} and a full description
of the collision analysis is not available. Therefore
we present some concrete cases in which we can detect these
collisions and estimate explicitly the collision time.
\medskip

The first case that we treat in the details is the one
of two initial transitions
with opposite orientation,
i.e.~$N=2$ and~$K=1$ in~\eqref{vep0}. In this case,
we can estimate the collision time~$T_c$ when the external stress has a sign
and when the initial configuration is small (in dependence of
the stress), according to the following result.

\begin{thm}\label{THM 2}
Let~$N=2$ and~$K=1$. Let~$\theta_0:=x_2^0-x_1^0$. Then:
\begin{itemize}
\item If $\sigma(t,x)\le0$ for any~$t\ge0$ and any~$x\in\R$, then
$$ T_c\le \displaystyle\frac{s\theta_0^{2s+1}}{(2s+1)\gamma}.$$
\item If
\begin{equation}\label{coo}
\theta_0<\left(\displaystyle\frac{1}{2s\|\sigma\|_\infty}\right)^\frac{1}{2s},\end{equation}
then
$$ T_c\le \frac{s\theta_0^{1+2s}}{\gamma\,(2s\theta_0^{2s}\|\sigma\|_\infty-1)}.$$
\item Conversely, if~\eqref{coo} is violated, there are examples in which~$T_c=+\infty$.
\end{itemize}
\end{thm}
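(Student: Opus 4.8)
The plan is to reduce the two‑particle system~\eqref{dynamicalsysNintro} to a single scalar ODE for the mutual distance and then argue by elementary comparison. First I would observe that $N=2$ and $K=1$ force exactly one of $\zeta_1,\zeta_2$ to equal $-1$, hence $\zeta_1\zeta_2=-1$; I will treat the case $\zeta_1=1$, $\zeta_2=-1$, the reversed labeling being entirely analogous (with the sign condition on $\sigma$ in the first item reversed accordingly). Setting $\theta(t):=x_2(t)-x_1(t)$, which by the very definition of $T_c$ is strictly positive on $[0,T_c)$ and whose vanishing is what defines $T_c$, a direct substitution in~\eqref{dynamicalsysNintro} gives the scalar equation
\[ \dot\theta(t)=\gamma\left(-\frac{1}{s\,\theta(t)^{2s}}+\sigma\big(t,x_1(t)\big)+\sigma\big(t,x_2(t)\big)\right), \qquad \theta(0)=\theta_0>0, \]
so everything becomes a statement about how fast a positive solution of this equation can reach $0$.

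For the first item, if $\sigma\le0$ the bracketed stress terms are nonpositive, whence $\dot\theta\le -\gamma/(s\,\theta^{2s})$, i.e.\ $\frac{d}{dt}\big(\theta^{2s+1}\big)\le -(2s+1)\gamma/s$; integrating on $[0,t]$ and using $\theta(t)^{2s+1}\ge0$ gives $t\le s\,\theta_0^{2s+1}/\big((2s+1)\gamma\big)$ for every $t<T_c$, which is the claimed bound. For the second item, assume~\eqref{coo}, i.e.\ $2s\,\theta_0^{2s}\|\sigma\|_\infty<1$. The comparison function $g(r):=\gamma\big(-1/(s\,r^{2s})+2\|\sigma\|_\infty\big)$ is increasing in $r>0$ and $g(\theta_0)=-\gamma\big(1-2s\,\theta_0^{2s}\|\sigma\|_\infty\big)/(s\,\theta_0^{2s})=:-c<0$, while $\dot\theta(t)\le g(\theta(t))$ for all $t$. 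I would first show that $\theta(t)\le\theta_0$ on $[0,T_c)$ by a continuity/bootstrap argument: since $\dot\theta(0)\le g(\theta_0)<0$ the gap initially decreases, and it cannot later cross the level $\theta_0$ upward, for at a first such crossing time one would get $0\le\dot\theta\le g(\theta_0)=-c<0$, a contradiction. Granted $\theta\le\theta_0$, monotonicity of $g$ yields $\dot\theta\le g(\theta_0)=-c$, hence $\theta(t)\le\theta_0-ct$, and positivity of $\theta$ forces $T_c\le\theta_0/c=s\,\theta_0^{1+2s}/\big(\gamma(1-2s\,\theta_0^{2s}\|\sigma\|_\infty)\big)$, which is the estimate claimed in the second item.

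For the last item I would exhibit an explicit example: take $\sigma\equiv\sigma_0$ for a constant $\sigma_0>0$ — which trivially satisfies~\eqref{sigmaassump} — and choose $\theta_0:=(2s\sigma_0)^{-1/2s}$, so that $2s\,\theta_0^{2s}\|\sigma\|_\infty=1$ and~\eqref{coo} fails. For this data the right‑hand side of the scalar equation above vanishes identically when $\theta=\theta_0$, so $\theta\equiv\theta_0$ and $x_1,x_2$ stay at fixed positions for all times, giving $T_c=+\infty$; choosing instead $\theta_0>(2s\sigma_0)^{-1/2s}$ makes $\theta$ even strictly increasing. The routine differential‑inequality manipulations are not the hard part here; the only points requiring genuine care are the reduction to the scalar equation (checking that $x_1,x_2$ enter the singular term only through $\theta$, which is special to $N=2$), the bootstrap keeping $\theta\le\theta_0$ in the second item (this is precisely what upgrades the differential inequality to the linear decay bound), and invoking the definition of $T_c$ in~\eqref{dynamicalsysNintro} to be sure that the first vanishing time of $\theta$ is indeed the collision time (no other breakdown being possible in a two‑particle system).
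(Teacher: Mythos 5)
Your proposal is correct and follows essentially the same route as the paper: reduce to the scalar ODE $\dot\theta=-\gamma/(s\theta^{2s})+\gamma(\sigma(t,x_1)+\sigma(t,x_2))$, compare with the pure-attraction equation when $\sigma\le0$, use the stationary threshold $\theta_s=(2s\|\sigma\|_\infty)^{-1/2s}$ to get the linear decay bound under~\eqref{coo}, and take a constant positive stress with $\theta_0=(2s\sigma_0)^{-1/2s}$ for the counterexample. Note only that your denominator $1-2s\theta_0^{2s}\|\sigma\|_\infty$ in the second bound is the correct (positive) one; the sign $2s\theta_0^{2s}\|\sigma\|_\infty-1$ appearing in the statement and at the end of the paper's computation is a typo.
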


The next case of interest is when we have three initial dislocations
that have alternate orientations. In this case, we can show
that the collision time is finite if no external stress is present and
we can give explicit bounds on it. Also, triple collisions occur
in symmetric situations.

\begin{thm}\label{THM 3}
Let
$$ C_s:=\displaystyle\frac{2^{2s+1}}{2^{2s}-1} >1.$$
Let~$N=3$, $\zeta_1=\zeta_3=+1$ and~$\zeta_2=-1$, and assume that~$\sigma\equiv0$.

Let~$\theta_1^0:=x_2^0-x_1^0$ and~$\theta_2^0:=x_3^0-x_2^0$.
Then
$$ T_c\in \left[ \tau_c,\ C_s \tau_c\right], \ {\mbox{ with }} \
\tau_c:=\displaystyle\frac{s\min\{\theta_1^0,\theta_2^0\}^{2s+1}}{(2s+1)\gamma}.$$
Moreover, the functions~$\theta_1(t):=x_2(t)-x_1(t)$
and~$\theta_2(t):=x_3(t)-x_2(t)$ are order preserving in time,
i.e.
$$ {\mbox{if $\theta_1^0<\theta_2^0$ then $\theta_1(t)<\theta_2(t)$
for every $t\in[0,T_c]$.}}$$
Furthermore,  if~$\theta_1^0=\theta_2^0$, then
a triple collision occurs, namely
$\theta_1(t)=\theta_2(t)>0$ for every~$t\in[0,T_c)$, and
$$ \theta_1(T_c)=\theta_2(T_c)=0 \ {\mbox{ with }} \
T_c=
\displaystyle\frac{C_s\;s\;(\theta_1^0)^{2s+1}}{(2s+1)\gamma}.$$
Viceversa, if a triple collision occurs
at time~$T_c$, then~$\theta_1^0=\theta_2^0$
and~$T_c=
\displaystyle\frac{C_s\;s\;(\theta_1^0)^{2s+1}}{(2s+1)\gamma}$.
\end{thm}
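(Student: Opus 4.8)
The plan is to reduce the three--particle evolution to a planar system for the two consecutive gaps and to control it by elementary differential inequalities, comparing it with the separable ODE that governs the symmetric orbit. On $[0,T_c)$ the three points stay ordered, so $\theta_1(t):=x_2(t)-x_1(t)$ and $\theta_2(t):=x_3(t)-x_2(t)$ are positive, with outer gap $x_3-x_1=\theta_1+\theta_2$. Substituting $\zeta_1=\zeta_3=1$, $\zeta_2=-1$ and $\sigma\equiv0$ into \eqref{dynamicalsysNintro} and subtracting consecutive equations gives, after a routine computation,
\[
\dot\theta_1=\frac{\gamma}{2s}\Big(-\frac{2}{\theta_1^{2s}}+\frac{1}{\theta_2^{2s}}+\frac{1}{(\theta_1+\theta_2)^{2s}}\Big),\qquad
\dot\theta_2=\frac{\gamma}{2s}\Big(-\frac{2}{\theta_2^{2s}}+\frac{1}{\theta_1^{2s}}+\frac{1}{(\theta_1+\theta_2)^{2s}}\Big).
\]
Two facts are then immediate: this system is invariant under the exchange $\theta_1\leftrightarrow\theta_2$, and, since $x_3-x_1=\theta_1+\theta_2\geq\max\{\theta_1,\theta_2\}$, a collision at $T_c$ amounts to $\theta_1(T_c)=0$ or $\theta_2(T_c)=0$ (both, in the case of a triple collision).

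For the order--preservation statement I would subtract the two equations to get $\frac{d}{dt}(\theta_2-\theta_1)=\frac{3\gamma}{2s}\big(\theta_1^{-2s}-\theta_2^{-2s}\big)$, so that $\operatorname{sign}(\dot d)=\operatorname{sign}(d)$ for $d:=\theta_2-\theta_1$. Hence $d$ is nondecreasing wherever it is positive, and if $\theta_1^0<\theta_2^0$ then $d(t)\geq d(0)>0$ for all $t\in[0,T_c]$, which is the asserted ordering. In particular the collision is then binary, between $x_1$ and $x_2$, so $\theta_1(T_c)=0$, and $\theta_2\geq\theta_1+d(0)>0$ throughout.

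To bound $T_c$, by the $\theta_1\leftrightarrow\theta_2$ symmetry I may assume $\theta_1^0\leq\theta_2^0$, so that $\theta_1\leq\theta_2$ on $[0,T_c]$ and $\theta_1(T_c)=0$. Dropping the two nonnegative terms in the equation for $\dot\theta_1$ gives $\dot\theta_1\geq-\gamma/(s\,\theta_1^{2s})$, i.e. $\big(\theta_1^{2s+1}\big)'\geq-(2s+1)\gamma/s$; integrating, $\theta_1(t)^{2s+1}\geq(\theta_1^0)^{2s+1}-\tfrac{(2s+1)\gamma}{s}t$, which cannot vanish before $\tau_c$, so $T_c\geq\tau_c$. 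For the upper bound I would use $\theta_2\geq\theta_1$ to estimate $\theta_2^{-2s}\leq\theta_1^{-2s}$ and $(\theta_1+\theta_2)^{-2s}\leq(2\theta_1)^{-2s}=2^{-2s}\theta_1^{-2s}$, so that
\[
\dot\theta_1\leq\frac{\gamma}{2s}\big(2^{-2s}-1\big)\theta_1^{-2s}=-\frac{\gamma\,(2^{2s}-1)}{2s\,2^{2s}}\,\theta_1^{-2s};
\]
integrating, $\theta_1(t)^{2s+1}\leq(\theta_1^0)^{2s+1}-\tfrac{(2s+1)\gamma(2^{2s}-1)}{2s\,2^{2s}}t$, which forces $\theta_1$ to vanish by time $\tfrac{2s\,2^{2s}(\theta_1^0)^{2s+1}}{(2s+1)\gamma(2^{2s}-1)}=C_s\tau_c$, hence $T_c\leq C_s\tau_c$.

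Finally, for the triple--collision dichotomy: if $\theta_1^0=\theta_2^0$, the $\theta_1\leftrightarrow\theta_2$ symmetry together with uniqueness for \eqref{dynamicalsysNintro} forces $\theta_1(t)\equiv\theta_2(t)=:\theta(t)$, and then $\dot\theta=-\tfrac{\gamma(2^{2s}-1)}{2s\,2^{2s}}\theta^{-2s}$; separation of variables gives $\theta(t)^{2s+1}=(\theta_1^0)^{2s+1}-\tfrac{(2s+1)\gamma(2^{2s}-1)}{2s\,2^{2s}}t$, which stays positive on $[0,T_c)$ and vanishes exactly at $T_c=\tfrac{C_s\,s\,(\theta_1^0)^{2s+1}}{(2s+1)\gamma}$ --- a triple collision. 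Conversely, if a triple collision occurs then $\theta_1(T_c)=\theta_2(T_c)=0$, i.e. $d(T_c)=0$, which by the sign rule $\operatorname{sign}(\dot d)=\operatorname{sign}(d)$ is impossible unless $d(0)=0$, i.e. $\theta_1^0=\theta_2^0$, and then $T_c$ takes the value just computed. The bookkeeping above is elementary once the planar reduction is in hand; the one step that really needs the right idea is the comparison behind the upper bound, where the estimates $\theta_2\geq\theta_1$ and $\theta_1+\theta_2\geq2\theta_1$ are sharp precisely along the symmetric orbit --- which both produces the constant $C_s$ and shows it is optimal, since that orbit realizes $T_c=C_s\tau_c$. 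A minor technical point to be checked along the way is that the $x_i$, and hence the $\theta_i$, extend continuously up to $t=T_c$, so that the boundary values $\theta_i(T_c)$ used above are meaningful.
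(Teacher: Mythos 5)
Your proof is correct and follows essentially the same route as the paper: the same reduction to the system for the gaps $(\theta_1,\theta_2)$, the same comparison with the symmetric separable ODE $\dot\theta=-\gamma(2^{2s}-1)\,\theta^{-2s}/(2^{2s+1}s)$ to get $\tau_c\le T_c\le C_s\tau_c$, and the same identity $\dot\theta_2-\dot\theta_1=\tfrac{3\gamma}{2s}\left(\theta_1^{-2s}-\theta_2^{-2s}\right)$ for the triple-collision dichotomy. The only (harmless) local deviation is that you derive order preservation directly from the sign of this identity, whereas the paper deduces it from uniqueness of the symmetric orbit and only uses the identity in the converse triple-collision step.
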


Next,  let us go back to the case of two initial dislocations with opposite orientation, i.e. $N=2$ and $K=1$. 
Suppose that a collision occurs at a time $0<T_c<+\infty$, so that if $(x_1(t),x_2(t))$ is the solution of \eqref{dynamicalsysNintro}, then
$x_1(T_c)=x_2(T_c)=x_c$. Then \eqref{limsupv^ep} and \eqref{liminfv^ep} imply that for any $x\neq x_c$, we have
$$\displaystyle\lim_{t\rightarrow T_c^-}\lim_{\ep\rightarrow 0^+}v_\ep(t,x)=0.$$
This can be rephrased saying that after the collision, the two dislocations cancel each other. Nevertheless, the limit of $v_\ep(t,x)$ 
keeps memory of them, in the sence that~$v_\ep$ at the point of collision~$x_c$
does not vanish at the limit. Indeed, we have
\begin{thm}\label{twodislxcprop}Assume $N=2$ and $K=1$. Let $v_\ep$ be the solution to  \eqref{vepeq}, then
\beq\label{limisuequalcolllision}
\displaystyle\limsup_{t\rightarrow T_c^-\atop\ep\rightarrow 0^+} v_\ep(t,x_c)\geq1.\eeq
\end{thm}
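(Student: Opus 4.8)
The plan is to argue by contradiction, exploiting the fact that $v_\ep$ solves a parabolic equation which satisfies a comparison principle, so that we can trap $v_\ep$ between carefully chosen sub- and supersolutions built out of the basic layer $u$. Recall that for $N=2$, $K=1$, the initial condition is (up to the stress correction and the constant $-1$) the sum $u\big(\frac{x-x_1^0}{\ep}\big)+u\big(\frac{x_2^0-x}{\ep}\big)-1$, i.e. a positively oriented layer at $x_1^0$ and a negatively oriented one at $x_2^0$, with $x_1^0<x_2^0$. The key point is that a positively oriented layer alone converges, as $\ep\to0^+$, to $H(x-x_1(t))$, and that $v_\ep$ must stay \emph{above} a subsolution modeled on such a single positive layer: by monotonicity in the initial datum (the negative layer only \emph{adds} to $v_\ep^0$ a nonnegative quantity near $x_1^0$, since $u\ge0$), one expects $v_\ep$ to dominate, near $x_c$, a one-layer solution whose interface arrives at $x_c$ at time $T_c$. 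This forces $\limsup v_\ep(t,x_c)\ge \lim_{t\to T_c^-} H(0^+)=1$, but the value of $H$ at $0$ must be handled through the semicontinuous envelope, which is exactly why the claim is only an inequality $\ge1$ and why the limsup is taken jointly.

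The steps, in order, would be: (1) Fix a small $\delta>0$ and choose a point $z<x_c$ with $x_c-z$ small; construct a subsolution $\underline v_\ep$ of \eqref{vepeq} on a space-time neighborhood of $(T_c,x_c)$ of the form $\ep^{2s}\beta^{-1}\sigma(t,x)+u\big(\frac{x-y(t)+c_\ep}{\ep}\big)-c_\ep'$, where $y(t)$ tracks the positively oriented particle $x_1(t)$ (or a slightly perturbed version of it reaching $x_c$ in the limit), and $c_\ep,c_\ep'\to0$ are small corrector constants chosen so that $\underline v_\ep(0,\cdot)\le v_\ep^0$ on all of $\R$ — here one uses $u\ge0$ to absorb the contribution of the negatively oriented layer near $x_1^0$, and the decay estimates on $u$ from \cite{psv,dpv,dfv} away from it. This is the standard ``ansatz plus corrector'' construction already used in \cite{gonzalezmonneau,dpv,dfv} for the one-orientation case, adapted to keep only the positive layer. (2) Apply the comparison principle for \eqref{vepeq} to get $v_\ep(t,x)\ge\underline v_\ep(t,x)$ for all $(t,x)$ in the domain, in particular $v_\ep(t,x_c)\ge \ep^{2s}\beta^{-1}\sigma(t,x_c)+u\big(\frac{x_c-y(t)+c_\ep}{\ep}\big)-c_\ep'$. (3) Since $y(t)\to x_c$ as $t\to T_c^-$ and $\ep\to0^+$ jointly (using continuity of $x_1(\cdot)$ up to $T_c$ and that $x_1(T_c)=x_c$), choose along a subsequence $t\to T_c^-$, $\ep\to0^+$ with $\frac{x_c-y(t)+c_\ep}{\ep}\to+\infty$; then $u\big(\frac{x_c-y(t)+c_\ep}{\ep}\big)\to1$ and the stress term and $c_\ep'$ vanish, giving $\limsup v_\ep(t,x_c)\ge1$.

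The main obstacle is step (1): ensuring that a \emph{single}-layer subsolution, which by itself would follow the one-particle ODE $\dot y=-\gamma\sigma(t,y)\equiv0$ (no interaction term, since there is only one particle), can nonetheless be guaranteed to have its interface reach $x_c$ by time $T_c$. The subtlety is that the \emph{true} positive particle $x_1(t)$ in the two-particle system is accelerated toward $x_2(t)$ by the attractive interaction, so $x_1(T_c)=x_c$ whereas a free single layer would sit still. One must therefore not simply freeze $y(t)=x_1^0$, but rather build the subsolution around the actual trajectory $x_1(t)$ of the coupled system, paying the price of an error term in the subsolution equation coming from the interaction; this error has to be shown to be of lower order in $\ep$ (it is $O(\ep^{2s})$, matching the scale of the other correctors) so that it can be compensated by the corrector constants — this is exactly the type of computation carried out in the proof of Theorem~\ref{mainthm} when establishing \eqref{liminfv^ep}, so in practice one would \emph{quote} that construction rather than redo it, and merely observe that evaluating the lower bound \eqref{liminfv^ep} along a path $(t,x)\to(T_c,x_c)$ approaching from $x<x_c$ with $x-x_1(t)>0$ yields $(v_0)_*$-value equal to $H(0^+)-\text{(something)}$; care is needed because $(v_0)_*$ at $(T_c,x_c)$ itself is $0$, so the limsup must be extracted \emph{before} passing to the collision time, i.e. for each fixed $t<T_c$ one gets $\lim_{\ep\to0}v_\ep(t,x_1(t)+\eta)\ge H(\eta)+H(x_1(t)+\eta-x_2(t))-1=1+0-1=1$ for small $\eta>0$ with $x_1(t)+\eta<x_2(t)$, and then letting $t\to T_c^-$ forces the evaluation point $x_1(t)+\eta\to x_c$, yielding \eqref{limisuequalcolllision}. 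Making this last limiting argument rigorous — in particular choosing $\eta=\eta(t)\to0$ at the right rate relative to $x_2(t)-x_1(t)\to0$ — is the genuine, if modest, difficulty.
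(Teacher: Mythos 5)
Your fallback route (quoting the lower bound \eqref{liminfv^ep} rather than rebuilding a barrier from scratch) is the right idea and is close in spirit to what the paper does, but your primary route does not work and the fallback, as written, has a genuine gap. On the primary route: the negatively oriented layer is $u\big(\frac{x_2^0-x}{\ep}\big)-1$, which is \emph{nonpositive} (it tends to $-1$ as $x\to+\infty$), not nonnegative as you claim; consequently $v_\ep^0(x)\to O(\ep^{2s})$ as $x\to+\infty$, while your single-layer ansatz $u\big(\frac{x-y(0)+c_\ep}{\ep}\big)-c'_\ep$ tends to $1-c'_\ep$ there. No small corrector constant can make a single positive layer lie below $v_\ep^0$ on all of $\R$, so the comparison principle cannot be invoked for that subsolution.

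The more important gap is in the limiting argument. The statement \eqref{limisuequalcolllision} fixes the spatial point at $x_c$, so what you actually need is $(v_0)_*(t,x_c)=1$ for $t$ close to $T_c$, i.e.\ that $x_1(t)<x_c<x_2(t)$ for such $t$. This is not automatic: a priori both particles could approach $x_c$ from the same side, in which case $(v_0)_*(t,x_c)=0$ for every $t<T_c$ and evaluating \eqref{liminfv^ep} at $(t,x_c)$ yields nothing. This straddling property is exactly the nontrivial input the paper supplies: from \eqref{dynamicalsysNintro} one shows $\dot x_1\to+\infty$ and $\dot x_2\to-\infty$ as $t\to T_c^-$, hence $x_1$ is eventually increasing and $x_2$ eventually decreasing, and since both converge to $x_c$ they must eventually lie on opposite sides of it. Your proposal never identifies this step; your concern about the rate of $\eta(t)$ is a symptom of evaluating at the moving point $x_1(t)+\eta$ instead of at the fixed point $x_c$. (A minor but telling slip: between the particles $v_0=H(x-x_1)+H(x_2-x)-1=1+1-1=1$; your expression $H(\eta)+H(x_1(t)+\eta-x_2(t))-1=1+0-1$ uses the wrong orientation for the second Heaviside and does not equal $1$.) Once the straddling property is in hand, the paper concludes by evaluating the explicit two-layer subsolution $\underline v_\ep$ at $x_c$ along a diagonal sequence in $(t,\delta,\ep)$, checking that both $u$-arguments tend to $+\infty$ and that the correctors $\ep^{2s}\dot{\underline x}_i\psi$ vanish despite the blow-up of the velocities; your black-box use of \eqref{liminfv^ep} at $(t,x_c)$ would also work at that point.
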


In the next two results, that are Theorems~\ref{AX-0} and~\ref{tyuxfghk},
we deal with the case of~$N$ transitions (with, in general,~$N>3$).
It seems that the picture in this case can be extremely rich,
so we will focus on two concrete cases: when one of the initial
distance between dislocations is much smaller than the others, and
when the orientations of the dislocations are alternate.\medskip

For this, we assume $\sigma\equiv 0$ and, for $i=1,...,N-1$, we
consider the distance between two consecutive dislocations:
\begin{equation}\label{AXX}\begin{split}
&\theta_i(t):=x_{i+1}-x_{i}\\ {\mbox{and }}\quad& 
\theta_i^0:=x^0_{i+1}-x^0_{i}>0.\end{split}\end{equation}
 Then, recalling~\eqref{dynamicalsysNintro}, we have that the $\theta_i$'s satisfy 
 \beq\label{dynamicalsysthetaN}\begin{cases} \dot{\theta}_i=\displaystyle\frac{\gamma}{2s}\left(\frac{2\zeta_i\zeta_{i+1}}{\theta_i^{2s}}+\displaystyle\sum_{j=1}^{i-1}\zeta_{i+1}\zeta_j \displaystyle\frac{1}{ (x_{i+1}-x_j)^{2s}}-\displaystyle\sum_{j=i+2}^{N}\zeta_{i+1}\zeta_j \displaystyle\frac{1}{ (x_j-x_{i+1})^{2s}}\right.\\
 \quad\left. -\displaystyle\sum_{j=1}^{i-1}\zeta_{i}\zeta_j \displaystyle\frac{1}{ (x_{i}-x_j)^{2s}}+\displaystyle\sum_{j=i+2}^{N}\zeta_{i}\zeta_j \displaystyle\frac{1}{ (x_j-x_{i})^{2s}} \right)&\text{in }(0,T_c)\\
 \theta_i(0)=\theta_i^0,
\end{cases}\eeq
$i=1,...,N-1$. Then we show that if two transitions with opposite orientations
are sufficiently close at the initial time, then a collision
in finite time occurs:

\begin{thm}\label{AX-0} Assume $N\ge 2$, $K\ge1$ and $\sigma\equiv 0$.
Then there exists $a_0\in(0,1)$
such that, if  for some $i=1,...,N-1$ 
\begin{eqnarray}
&& \label{AX-1}
\zeta_i\zeta_{i+1}=-1\\
{\mbox{and }}&&\label{AX-2}
\theta_i^0\leq a_0\min_{j\neq i}\theta_j^0,\end{eqnarray}
then 
\begin{equation}\label{AX-3}
\theta_i(t)\leq a_0\min_{j\neq i}\theta_j(t)\quad\text{for any }t>0.\end{equation}
Moreover~$\theta_i$ goes to zero in a finite time $T_c$, with 
\beq\label{Tcthetaismall}T_c\leq\frac{s(\theta_i^0)^{2s+1}}{(2s+1)\gamma[1-(N-2)a_0^{2s}]}.\eeq
\end{thm}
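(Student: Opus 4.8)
\noindent\emph{Sketch of proof.} The heuristic is that, when $\theta_i$ is much smaller than all the other gaps, the equation~\eqref{dynamicalsysthetaN} for $\dot\theta_i$ is dominated by its self-interaction term $\frac{\gamma}{2s}\cdot\frac{2\zeta_i\zeta_{i+1}}{\theta_i^{2s}}=-\frac{\gamma}{s\,\theta_i^{2s}}$, which by~\eqref{AX-1} is strongly \emph{attractive}. All the remaining terms in~\eqref{dynamicalsysthetaN} involve distances $x_{i+1}-x_j$ (with $j\le i-1$), $x_j-x_{i+1}$ (with $j\ge i+2$), $x_i-x_j$ and $x_j-x_i$, each of which is a sum of consecutive gaps containing $\theta_{i-1}$ or $\theta_{i+1}$, hence bounded below by $\min_{j\ne i}\theta_j$. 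Assuming the comparison we want to propagate, namely $\min_{j\ne i}\theta_j\ge\theta_i/a_0$, each of the at most $2(N-2)$ remaining terms is controlled by $a_0^{2s}\theta_i^{-2s}$, so that
\[ \dot\theta_i\le-\frac{\gamma}{s\,\theta_i^{2s}}\bigl(1-(N-2)a_0^{2s}\bigr)=:-\frac{c}{\theta_i^{2s}}, \]
which is strictly negative provided $(N-2)a_0^{2s}<1$.

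The crux of the proof is to propagate~\eqref{AX-2} forward in time, i.e. to establish~\eqref{AX-3} on the whole existence interval $[0,T_c)$. I would do this by a continuity argument: let $T^*\in(0,T_c]$ be maximal such that $\theta_i(\tau)\le a_0\min_{j\ne i}\theta_j(\tau)$ for every $\tau\in[0,T^*)$, and suppose $T^*<T_c$. On $[0,T^*)$ the estimate above holds, and it suffices to check that $\dot\theta_j\ge a_0^{-1}\dot\theta_i$ for every $j\ne i$, since then $t\mapsto a_0\theta_j(t)-\theta_i(t)$ is non-decreasing, hence non-negative by~\eqref{AX-2}, contradicting the maximality of $T^*$. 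If $j$ is not adjacent to $i$ (i.e. $|j-i|\ge2$), then $\theta_i$ never appears alone as a denominator in the equation for $\dot\theta_j$, so every term there is bounded by $(\min_{k\ne i}\theta_k)^{-2s}$; combined with $\theta_i\le a_0\min_{k\ne i}\theta_k$ this gives $\dot\theta_j\ge a_0^{-1}\dot\theta_i$ for $a_0$ small. If $j=i\pm1$, the equation for $\dot\theta_{i\pm1}$ contains the term $-\zeta_i\zeta_{i+1}\,\theta_i^{-2s}=\theta_i^{-2s}$ (again by~\eqref{AX-1}), which dominates all the others for $a_0$ small, whence $\dot\theta_{i\pm1}>0>a_0^{-1}\dot\theta_i$. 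Picking $a_0\in(0,1)$ small enough that $(N-2)a_0^{2s}<1$ and the two previous smallness requirements hold fixes $a_0$ in terms of $N$ and $s$ only, and proves~\eqref{AX-3}.

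Finally, with~\eqref{AX-3} in hand the inequality $\dot\theta_i\le-c\,\theta_i^{-2s}$ holds on all of $[0,T_c)$ with $c=\gamma(1-(N-2)a_0^{2s})/s>0$; multiplying by $(2s+1)\theta_i^{2s}$ yields $\frac{d}{dt}\theta_i^{2s+1}\le-(2s+1)c$, so $\theta_i(t)^{2s+1}\le(\theta_i^0)^{2s+1}-(2s+1)ct$ on $[0,T_c)$. Since $T_c$ is the first collision time, $\theta_i$ cannot remain positive beyond $(\theta_i^0)^{2s+1}/((2s+1)c)$, which is exactly~\eqref{Tcthetaismall}. I expect the delicate point to be the propagation of~\eqref{AX-3}: one must keep careful track of the signs and sizes of all cross-interaction terms in~\eqref{dynamicalsysthetaN}, distinguishing the adjacent gaps $\theta_{i\pm1}$ (which actually \emph{increase}, pushed apart by the large term $\theta_i^{-2s}$ carrying the favourable sign) from the far ones (for which only a crude size bound is available), and verify that a single $a_0=a_0(N,s)$ makes all of these estimates compatible.
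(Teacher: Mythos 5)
Your proposal is correct and follows essentially the same strategy as the paper: propagate the smallness condition $\theta_i\le a_0\min_{j\ne i}\theta_j$ by a continuity argument based on the differential inequalities $\dot\theta_i\le\frac{\gamma}{s\theta_i^{2s}}\bigl(-1+(N-2)a_0^{2s}\bigr)$ and a crude lower bound on $\dot\theta_j$ for $j\ne i$, with $a_0$ chosen so that (as in the paper's condition) $(N-2)a_0^{2s}+(N-1)a_0^{2s+1}<1$, and then integrate $\dot\theta_i\le-c\,\theta_i^{-2s}$ to get the collision time. The paper runs the bootstrap on the ratio $\theta_i/\theta_j$ rather than on the difference $a_0\theta_j-\theta_i$, and treats all $j\ne i$ uniformly instead of splitting off the adjacent gaps, but these are immaterial variants of the same argument.
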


Some observations on Theorem~\ref{AX-0} are in order.
First of all, condition~\eqref{AX-1} states that the orientations
of the $i$th and~$(i+1)$th dislocations have opposite signs,
and~\eqref{AX-2} means that the initial distance between these dislocation
is small (when compared with the other dislocation distances).
Then, we obtain in~\eqref{AX-3} that this smallness and order condition
on the distances is preserved in time. 

Also, we remark that the estimate of the
collision time obtained in~\eqref{Tcthetaismall}
is somehow sharp, since it
reduces to the one in Theorem~\ref{THM 2} when~$N=2$.
\medskip

Next result deals with the alternating case, i.e. the case
in which after any dislocation we have a dislocation with the opposite orientation.
In this case, collisions occur, and we can estimate the collision time
according to the following result:

\begin{thm}\label{tyuxfghk} Assume $\sigma\equiv 0$ and \begin{equation}\label{alt}
\zeta_i\zeta_{i+1}=-1\end{equation} for any $i=1,...,N-1$. Then a collision occurs
in a finite time $T_c$, with
\beqs T_c\leq \frac{(N-1)(x^N_0-x^1_0)^{2s+1}}{(2s+1)\gamma}\quad\text{if }N\text{ is odd},\eeqs
and 
\beqs T_c\leq \frac{s(x^N_0-x^1_0)^{2s+1}}{(2s+1)\gamma}\quad\text{if }N\text{ is even}.\eeqs
\end{thm}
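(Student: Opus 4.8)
The plan is to reduce the statement to an energy--type estimate on a single scalar quantity that measures the "total width" of the configuration, namely the difference $x_N - x_1$ (or a suitable partial sum of the gaps $\theta_i$), and to show that this quantity satisfies a differential inequality forcing it, or one of the gaps, to hit zero before the stated time. Since $\sigma\equiv0$, the system \eqref{dynamicalsysthetaN} for the gaps is autonomous, and the alternating hypothesis \eqref{alt} fixes all the signs $\zeta_i\zeta_j = (-1)^{i-j}$. The first step is therefore to substitute these signs into \eqref{dynamicalsysNintro} and examine the combination that appears in $\dot x_N - \dot x_1$. The key algebraic point I expect to exploit is that in the alternating case the interaction between $x_1$ and $x_N$ contributes $\zeta_1\zeta_N \,(x_1-x_N)/(2s|x_1-x_N|^{1+2s})$, and whether this dominant long-range term is attractive or repulsive is governed precisely by the parity of $N-1$, which is exactly the dichotomy in the two displayed bounds.

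Second, for the case $N$ even I would try to show directly that $\frac{d}{dt}(x_N-x_1) \le -\frac{\gamma}{s}\,(x_N-x_1)^{-2s}$, using that the outermost pair $(x_1,x_N)$ has opposite orientation (so it attracts) and that every other cross term, when symmetrized into the derivative of $x_N - x_1$, has a favorable sign or can be discarded by the convexity/monotonicity remark made for $V_0$ in the introduction. Integrating this ODE inequality from the initial width $x_N^0 - x_1^0$ gives a collision (i.e. $x_N - x_1 \to 0$, forcing two particles to meet) by time $\frac{s(x_N^0-x_1^0)^{2s+1}}{(2s+1)\gamma}$, which is the claimed bound. For $N$ odd the outermost pair has the \emph{same} orientation and repels, so this crude argument fails; instead I would look at the inner gaps. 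Here the natural object is $\min_i \theta_i(t)$: I would argue that the gap attached to the most "compressed" interior transition shrinks, obtaining $\dot\theta_{i_*} \le -\frac{\gamma}{s}\theta_{i_*}^{-2s}$ for a well-chosen index, possibly after summing several consecutive $\theta_i$ equations so that interior cross terms telescope; the factor $(N-1)$ in the odd bound suggests that one estimates a sum of up to $N-1$ gaps rather than a single one, or equivalently bounds $\theta_{i_*}^0 \le x_N^0 - x_1^0$ trivially and loses a factor in passing from the single gap to the full width.

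The main obstacle I anticipate is controlling the \emph{sign} of the numerous cross terms that appear once \eqref{dynamicalsysthetaN} is written out: each $\dot\theta_i$ contains attractive and repulsive contributions from all the other dislocations, and the alternating pattern makes these alternate in sign along the chain. To handle this I would rely on the following structural facts: (i) the ordering $x_1<\dots<x_N$ is preserved up to $T_c$ (so all the denominators keep a fixed sign), and (ii) the convexity of $r\mapsto r^{1-2s}/(2s-1)$ resp. $-\log r$ noted in the introduction, which lets me compare $\sum_j \pm|x_i - x_j|^{-2s}$-type sums against their dominant term by a telescoping/convex-majorization argument in the spirit of the one already carried out for Theorem~\ref{AX-0}. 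Once the correct scalar quantity and its differential inequality are identified, the conclusion is a one-line ODE comparison, and the finiteness of $T_c$ together with the explicit bound follows; the parity split in the two bounds then comes out automatically from whether the dominant surviving term is the outer attractive pair ($N$ even) or an aggregate of interior gaps ($N$ odd).
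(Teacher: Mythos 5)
Your overall strategy---reduce to a scalar differential inequality for the total width $x_N-x_1=\theta_1+\dots+\theta_{N-1}$ and conclude by ODE comparison---is exactly the paper's, and your treatment of the case $N$ even is essentially the argument given there: writing $\dot x_N-\dot x_1$ as an alternating sum of terms $(x_j-x_k)^{-2s}$ and using the monotonicity of $r\mapsto r^{-2s}$ to pair off all but the outermost attractive contributions, one gets $\dot x_N-\dot x_1\le -\frac{\gamma}{s}(x_N-x_1)^{-2s}$ and hence the stated bound. So that half of the proposal is sound.

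The case $N$ odd, however, contains a genuine gap. You correctly observe that the outer pair now repels, but the remedy you propose---tracking $\min_i\theta_i(t)$ and claiming $\dot\theta_{i_*}\le-\frac{\gamma}{s}\theta_{i_*}^{-2s}$ for a well-chosen index---does not survive scrutiny: from \eqref{dynamicalsysthetaN} the far-field contributions to a single $\dot\theta_i$ number up to $2(N-2)$ and each is of size up to $\frac{\gamma}{2s}\theta_{i_*}^{-2s}$ when the minimal gap is comparable to the others, so without exploiting their alternating signs they can overwhelm the leading attractive term $-\frac{\gamma}{s}\theta_i^{-2s}$ for $N\ge4$; and the ``summing several consecutive gaps so that cross terms telescope'' step is left entirely unspecified. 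The paper's actual argument stays with $x_N-x_1$ even for $N$ odd: the alternating sum is regrouped into telescoped differences $\alpha_\ell+\beta_\ell\ge0$ (so the repulsive outer term is absorbed, not avoided), each pair is bounded below by $\frac{2s(\theta_\ell+\theta_{\ell+1})}{(\theta_1+\dots+\theta_{N-1})^{1+2s}}$ via a Taylor expansion, and then one keeps only the pair indexed by the \emph{maximal} gap $\theta_{j(t)}\ge\frac{1}{N-1}(x_N-x_1)$---not the minimal one---which is precisely where the factor $N-1$ in the bound comes from. Identifying the maximal gap as the right object, and the nonnegativity of every telescoped pair, are the two ideas missing from your sketch; with them the odd case closes, without them the claimed differential inequality is not established.
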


Notice that condition~\eqref{alt} says that the dislocations
have an alternate orientation (i.e. if the~$i$th dislocation
is positive oriented, then the~$(i+1)$th is negative oriented,
and viceversa).

We observe that the collision times obtained in Theorem~\ref{tyuxfghk}
is bounded by the initial maximal dislocation distance to the power~$2s+1$.
This estimate is, in a sense, optimal, when compared with the explicit estimates
in Theorems~\ref{THM 2} and~\ref{THM 3}.\medskip

The rest of the paper is organized as follows.
First, in Section~\ref{PP}
we give some general preliminary
results and some heuristics which link the partial differential
equation in~\eqref{vepeq} with the system of ordinary differential
equations in~\eqref{dynamicalsysNintro}.

Then, we
deal with the analysis of the collisions of the dynamical system
in~\eqref{dynamicalsysNintro}, which has somehow an independent interest: 
we study the case of two, three
and~$N$ dislocations in Sections~\ref{Tr2}, \ref{Tr3} and~\ref{TrN},
respectively. In this way, we also complete the proofs of Theorems~\ref{THM 2},
\ref{THM 3}, \ref{AX-0} and~\ref{tyuxfghk}.

Then, in Section~\ref{profmainthmsec} we prove Theorems~\ref{mainthm}
and~\ref{twodislxcprop}.

\section{Preliminary observations}\label{PP}

\subsection{Toolbox}
In this section we recall some general auxiliary results that  will be used in the rest of the paper. 
We recall that the existence of a unique solution of \eqref{u} is proven in \cite{cs}, while asymptotic estimates for $u$ and $u'$ are given  in  \cite{psv}. Finer estimates on $u$ are shown 
in \cite{dpv} and 
\cite{dfv} respectively when $s\in\left[\frac{1}{2},1\right)$ and $s\in\left(0,\frac{1}{2}\right)$. We collect these results in the following
 \begin{lem}\label{uinfinitylem} Assume that  \eqref{W} holds, then there exists a unique solution $u\in C^{2,\alpha}(\R)$. Moreover,
  there exists a  constant $C >0$ and $\kappa>2s$ (only depending on~$s$)
such that 
\begin{equation}\label{uinfinity}\left|u(x)-H(x)+\displaystyle\frac{1}{2sW''(0) 
}\displaystyle\frac{x}{|x|^{2s}}\right|\leq \displaystyle\frac{C}{|x|^{\kappa}},\quad\text{for }|x|\geq 1,
\end{equation} 
and
\begin{equation}\label{u'infinity}|u'(x)|\leq \displaystyle\frac{C}{|x|^{1+2s}}\quad\text{for }|x|\geq 1.
\end{equation} 
\end{lem}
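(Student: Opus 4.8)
The plan is to obtain this lemma by assembling facts already available in the literature, and to record the key computation behind the explicit asymptotics. The existence and the uniqueness (with the normalization $u(0)=1/2$) of a monotone solution of \eqref{u}, together with its boundedness, are proven in \cite{cs}. To upgrade the regularity to $u\in C^{2,\alpha}(\R)$, I would bootstrap: $W\in C^{3,\alpha}$ and $u\in L^\infty$ give $W'(u)\in L^\infty$, whence fractional Schauder estimates for $\I$ yield Hölder regularity of $u$; feeding this back into the right-hand side and iterating (using that $v\mapsto W'(v)$ is of class $C^{2,\alpha}$) produces $u\in C^{2,\alpha}(\R)$, which is all that is needed for \eqref{u} to make classical sense.

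For the asymptotics \eqref{uinfinity}, the starting point is the elementary identity
$$\I[H](x)=-\frac{1}{2s}\,\frac{x}{|x|^{1+2s}}\qquad\text{for }x\ne0,$$
obtained by computing the principal value integral (for $x>0$ the integrand equals $-|y|^{-1-2s}$ on $\{y<-x\}$ and vanishes otherwise, and one uses antisymmetry for $x<0$). Heuristically, write $u=H+\phi$; since $W'\equiv0$ on $\Z$ and, by periodicity, $W''(1)=W''(0)=\beta$, linearizing $W'$ near the integers turns $\I u=W'(u)$ into $\I H+\I\phi=\beta\phi+O(\phi^2)$ as $|x|\to+\infty$. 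Because $\I\phi$ inherits a faster decay than $\phi$ (applying $\I$ to a profile of homogeneity $-2s$ produces homogeneity $-4s$), the leading balance is $\phi\approx\beta^{-1}\I H$, which is exactly the correction term in \eqref{uinfinity}. The remainder $u-H-\beta^{-1}\I H$ then solves an equation of the same type with a source decaying strictly faster than $|x|^{-2s}$, and iterating this improvement through a comparison argument gives the bound $C|x|^{-\kappa}$ with some $\kappa>2s$. This program — starting from the rough bound $|u(x)-H(x)|\le C|x|^{-2s}$ of \cite{psv} — is carried out in \cite{dpv} for $s\in[1/2,1)$, in \cite{dfv} for $s\in(0,1/2)$, and in \cite{gonzalezmonneau} for $s=1/2$, and I would simply invoke these.

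Finally, for \eqref{u'infinity} I would differentiate \eqref{u}: the function $w:=u'>0$ solves the linear nonlocal equation $\I w=W''(u)\,w$ in $\R$. Since $u\to0,1$ at $\mp\infty$ and $\beta=W''(0)>0$, the coefficient $W''(u(x))$ is bounded below by a positive constant for $|x|$ large, so $w$ is a positive, bounded (in fact integrable, since $\int_\R u'=1$) solution of a linear equation whose zero-order term is coercive at infinity; comparing $w$ from above with barriers of homogeneity $-(1+2s)$ — the decay rate of the resolvent kernel of $\I+\beta$ — suitably adapted near the origin, yields \eqref{u'infinity}, which is the content of the gradient decay estimate in \cite{psv}. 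The one genuinely delicate point in the whole lemma is the sharp remainder exponent $\kappa>2s$ in \eqref{uinfinity}: pinning down both the exact constant $\tfrac{1}{2s\beta}$ in the leading correction and the improved decay of the remainder requires the careful barrier constructions and the iteration on the decay rate performed in \cite{dpv,dfv,gonzalezmonneau}; everything else (existence, uniqueness, the Schauder bootstrap, and the comparison bound for $u'$) is standard.
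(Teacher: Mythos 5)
Your proposal is correct and matches the paper's treatment: the paper offers no proof of this lemma, but simply collects it from the literature, attributing existence and uniqueness to \cite{cs}, the decay of $u$ and $u'$ to \cite{psv}, and the refined expansion with exponent $\kappa>2s$ to \cite{dpv}, \cite{dfv} and \cite{gonzalezmonneau} according to the range of $s$ — exactly the references you invoke, and your sketch of the linearization $\phi\approx\beta^{-1}\I H$ and of the equation $\I u'=W''(u)u'$ is the standard mechanism behind those results. Note only that your correction term $-\tfrac{1}{2s\beta}\tfrac{x}{|x|^{1+2s}}$ is the right one (it is what the paper itself uses in its heuristics section), whereas the displayed formula \eqref{uinfinity} writes $|x|^{2s}$ in the denominator, an apparent typo.
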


Next, we introduce the function $\psi$ to be the solution of
\beq\label{psi}\begin{cases}\I\psi-W''(u)\psi=u'+\eta(W''(u)-W''(0))&\text{in }\R\\
\psi(-\infty)=0=\psi(+\infty),
\end{cases}\eeq where $u$ is the solution of \eqref{u} and
\beq\label{eta}\eta:=\displaystyle\frac{1}{W''(0)}
\displaystyle\int_\R(u'(x))^2dx
=\displaystyle\frac{1}{\gamma \beta}.\eeq
For a detailed heuristic motivation of such equation see Section 3.1 of \cite{gonzalezmonneau}.
The following results are  proven in \cite{dpv} and 
\cite{dfv}. 
\begin{lem} Assume that  \eqref{W} holds, then there exists a unique solution $\psi$ to \eqref{psi}. Furthermore 
$\psi\in C^{1,\alpha}_{loc}(\R)\cap L^\infty(\R)$ for some $\al\in(0,1)$ and $\psi'\in L^\infty(\R)$.
\end{lem}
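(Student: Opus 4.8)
The plan is to prove existence, uniqueness and the regularity of $\psi$ solving the linearized equation \eqref{psi} by a combination of variational and comparison arguments, exploiting the nondegeneracy of the operator $\I - W''(u)$ away from the point where the layer passes through the inflection. First I would rewrite \eqref{psi} as $\mathcal{L}\psi = f$ with $\mathcal{L}\psi := \I\psi - W''(u)\psi$ and $f := u' + \eta(W''(u) - W''(0))$. The key structural fact is that $\mathcal{L}$ has a bounded inverse on the space of functions decaying at infinity: indeed $u'$ is a positive solution of the homogeneous equation $\mathcal{L}u' = 0$ (differentiate \eqref{u}), so $\mathcal{L}$ is, up to conjugation by $u'$, a nonnegative Dirichlet-form operator, and by the ground-state-substitution trick (writing $\psi = u' \phi$) one obtains a coercive bilinear form. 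The right-hand side $f$ is admissible because, by \eqref{u'infinity}, $u'(x) = O(|x|^{-1-2s})$ as $|x|\to\infty$, while $W''(u(x)) - W''(0) \to 0$ at the same polynomial rate since $u(x) - H(x) = O(|x|^{-2s})$ by \eqref{uinfinity} and $W''$ is $C^{1,\alpha}$; hence $f \in L^2(\R) \cap L^\infty(\R)$ with polynomial decay.

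The steps, in order, would be: (1) set up the weighted bilinear form $B(\phi,\varphi) := \int_\R \bigl((u')^2\, \text{(fractional gradient terms)}\bigr)$ obtained from $\mathcal{L}$ after the substitution $\psi = u'\phi$, and verify it is continuous and coercive on the natural weighted $H^s$-type space — here one uses crucially that $u' > 0$ everywhere (from \eqref{u}) so the weight never degenerates on compact sets, and that $u' \sim c|x|^{-1-2s}$ at infinity controls the behavior there; (2) apply Lax–Milgram to get a unique weak solution $\phi$, hence a unique $\psi = u'\phi$ in the appropriate space, and check that the decay of $f$ forces $\psi(\pm\infty) = 0$; (3) upgrade regularity: since $W''(u) \in C^{2,\alpha}_{loc}$ and $f \in C^{1,\alpha}_{loc}$ (because $u \in C^{2,\alpha}$ by Lemma~\ref{uinfinitylem} and $W \in C^{3,\alpha}$), interior regularity estimates for the fractional Laplacian (e.g. the Schauder-type estimates in \cite{s}) give $\psi \in C^{1,\alpha}_{loc}(\R)$, and then $\psi' \in L^\infty(\R)$ follows by combining the local bound with the decay of $\psi$ and $f$ at infinity via a scaling/covering argument; (4) conclude $\psi \in C^{1,\alpha}_{loc}(\R)\cap L^\infty(\R)$ with $\psi' \in L^\infty(\R)$.

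An alternative, possibly cleaner, route for existence is a comparison/continuity-method argument: show that the family of operators $\I - \mu W''(u)$ for $\mu \in [0,1]$ is invertible by establishing an a priori bound $\|\psi\|_{L^\infty} \le C\|f\|_{L^\infty}$ using the sign of $W''(0) > 0$ near the wells (so $W''(u) > 0$ for $|x|$ large, giving a maximum-principle barrier outside a compact set) together with the nondegeneracy from $u' > 0$ inside. One then bootstraps as in step (3) above. Since this is essentially the content of \cite{gonzalezmonneau}, \cite{dpv} and \cite{dfv}, which are cited, the proof here can be quite brief, essentially recalling the construction and pointing to those references for the quantitative estimates.

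The main obstacle will be the delicate point that the zero-order coefficient $-W''(u)$ does \emph{not} have a favorable sign on all of $\R$ — it is negative precisely where the potential is most nonconvex, near $u = 1/2$ — so naive maximum-principle arguments fail on compact sets. The way around it is exactly the positivity of $u'$: the homogeneous equation has the explicit positive solution $u'$, which both rules out a kernel in the decaying class (any decaying element of the kernel would, by the refined maximum principle / sliding of $u'$, have to be a multiple of $u'$, but $u'$ does not decay fast enough or — depending on formulation — one checks it is not in the weighted space with the right integrability) and supplies the weight making the energy coercive. Making the integrability bookkeeping at infinity precise, so that $f$ lands in the dual of the weighted space and $\psi = u'\phi$ genuinely tends to $0$ at $\pm\infty$, is the one place requiring care; the decay rates \eqref{uinfinity}–\eqref{u'infinity} with $\kappa > 2s$ are tailored for exactly this.
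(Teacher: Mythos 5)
The paper does not actually prove this lemma: it is quoted verbatim from the references, with the sentence ``The following results are proven in \cite{dpv} and \cite{dfv}'' (the case $s=1/2$ going back to \cite{gonzalezmonneau}). So a one-line pointer to those papers would have matched the text. Your sketch, however, read as a self-contained argument, fails at a precise point.

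The missing idea is the kernel of the linearized operator and the attendant Fredholm solvability condition. Differentiating \eqref{u} gives $\I u'=W''(u)u'$, so $u'$ solves the homogeneous equation; moreover $u'>0$ and, by \eqref{u'infinity}, $u'(x)=O(|x|^{-1-2s})$, so $u'$ tends to $0$ at $\pm\infty$ and lies in $L^2(\R)$ and in any reasonable weighted space you would use. Hence $\mathcal{L}=\I-W''(u)$ does \emph{not} have a bounded inverse on decaying functions; the bilinear form obtained from the ground-state substitution $\psi=u'\phi$ is nonnegative but \emph{degenerate} (it vanishes on constant $\phi$, i.e.\ on multiples of $u'$), so it is not coercive and Lax--Milgram does not apply as you state it. Your attempted escape --- that $u'$ ``does not decay fast enough'' or is not in the right space --- is simply false, and for the same reason the a priori bound $\|\psi\|_{L^\infty}\le C\|f\|_{L^\infty}$ required by your continuity method cannot hold at $\mu=1$ (take $f=0$ and $\psi$ a large multiple of $u'$). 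What actually makes the problem solvable is the orthogonality of the right-hand side to the kernel, and this is exactly why $\eta$ has the value prescribed in \eqref{eta}: using $\int_\R W''(u)u'\,dx=W'(1)-W'(0)=0$ (which is \eqref{intwppup=0}) and $\int_\R u'\,dx=1$, one checks that
\begin{equation*}
\int_\R\bigl(u'+\eta(W''(u)-W''(0))\bigr)u'\,dx=\int_\R(u')^2\,dx-\eta\,W''(0)=0.
\end{equation*}
Any correct proof (variational on the orthogonal complement of $u'$, or via the Fredholm alternative) must verify this identity; your proposal never mentions it and treats $\eta$ as inert data. The same kernel also shows that uniqueness within the class defined by \eqref{psi} alone can only hold modulo multiples of $u'$, so a normalization is needed --- a point the cited references handle and your argument, as written, cannot.
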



\subsection{Heuristics of the dynamics}
We think that it could be useful to understand the
heuristic derivation of~\eqref{dynamicalsysNintro}
in the simpler setting of two particles with different orientations
(i.e.~$N=2$ and $K=1$).

For this, let $u$ be the solution of \eqref{u}. Let us introduce the notation
$$u_{\ep,1}(t,x):=u\left(\displaystyle\frac{x-x_1(t)}{\ep}\right),\quad u_{\ep,2}(t,x):=u\left(\displaystyle\frac{x_2(t)-x}{\ep}\right)-1,$$
and with a slight abuse of notation
$$u'_{\ep,1}(t,x):=u'\left(\displaystyle\frac{x-x_1(t)}{\ep}\right),\quad u'_{\ep,2}(t,x):=u'\left(\displaystyle\frac{x_2(t)-x}{\ep}\right).$$

Let us consider the following ansatz for $v_\ep$

$$v_\ep(t,x)\simeq \ueu(t,x)+\ued(t,x)=u\left(\displaystyle\frac{x-x_1(t)}{\ep}\right)+u\left(\displaystyle\frac{x_2(t)-x}{\ep}\right)-1.$$
Then, we compute

\beqs\begin{split}(v_\ep)_t&=-u'\left(\displaystyle\frac{x-x_1(t)}{\ep}\right)\displaystyle\frac{\dot{x}_1(t)}{\ep}+u'\left(\displaystyle\frac{x_2(t)-x}{\ep}\right)\displaystyle\frac{\dot{x}_2(t)}{\ep}\\
&=-u'_{\ep,1}(t,x)\displaystyle\frac{\dot{x}_1(t)}{\ep}+u'_{\ep,2}(t,x)\displaystyle\frac{\dot{x}_2(t)}{\ep},\end{split}\eeqs
and using the equation \eqref{u} and the periodicity of $W$
\beqs\begin{split}\I v_\ep(t,x)&=\displaystyle\frac{1}{\ep^{2s}}\I u \left(\displaystyle\frac{x-x_1(t)}{\ep}\right)+\displaystyle\frac{1}{\ep^{2s}}\I u \left(\displaystyle\frac{x_2(t)-x}{\ep}\right)\\&=
\displaystyle\frac{1}{\ep^{2s}} W'\left(u\left(\displaystyle\frac{x-x_1(t)}{\ep}\right)\right)+ \displaystyle\frac{1}{\ep^{2s}}W'\left(u\left(\displaystyle\frac{x_2(t)-x}{\ep}\right)\right)\\&=
\displaystyle\frac{1}{\ep^{2s}}W'(\ueu(t,x))+\displaystyle\frac{1}{\ep^{2s}}W'(\ued(t,x)).
\end{split}\eeqs

\noindent By inserting into \eqref{vepeq},  we obtain
\beq\label{dynamsisteqbeforeint}\begin{split}-u'_{\ep,1}\displaystyle\frac{\dot{x}_1}{\ep}+u'_{\ep,2}\displaystyle\frac{\dot{x}_2}{\ep}=\displaystyle\frac{1}{\ep^{2s+1}}\Big(W'(\ueu)+W'(\ued)-W'(\ueu+\ued)\Big)+\displaystyle\frac{\sigma}{\ep}.\end{split}\eeq

\noindent Now we make some observations on the asymptotics of the potential $W$. First of all, we notice that the periodicity of $W$ and the asymptotic behavior of $u$ imply
\beq\label{intwpup=0}\displaystyle\int_\R W'(u(x))u'(x)dx=\displaystyle\int_\R\displaystyle\frac{d}{dx} W(u(x))dx=W(1)-W(0)=0,\eeq and similarly
\beq\label{intwppup=0}\displaystyle\int_\R W''(u(x))u'(x)dx=0.\eeq

\noindent Next, we use estimate \eqref{uinfinitylem}  and make a Taylor expansion of $W'$ at 0 to compute for $x\neq x_2$
\beqs\begin{split}W'\left(u\left(\displaystyle\frac{x_2-x}{\ep}\right)\right)&\simeq W'\left(H\left(\displaystyle\frac{x_2-x}{\ep}\right)+\displaystyle\frac{\ep^{2s}(x-x_2)}{2s W''(0)|x-x_2|^{1+2s}}\right)\\&
=W'\left(\displaystyle\frac{\ep^{2s}(x-x_2)}{2s W''(0)|x-x_2|^{1+2s}}\right)\\&\simeq W''(0)\displaystyle\frac{\ep^{2s}(x-x_2)}{2s W''(0)|x-x_2|^{1+2s}}\\&
=\displaystyle\frac{\ep^{2s}(x-x_2)}{2s |x-x_2|^{1+2s}}.\end{split}\eeqs
So, we use the substitution $y=(x-x_1)/\ep$ to see that 
\beqs\begin{split}\displaystyle\frac{1}{\ep}\displaystyle\int_\R W'(\ued(t,x))\ueu'(t,x)dx&
\simeq \displaystyle\frac{1}{\ep}\displaystyle\int_\R \displaystyle\frac{\ep^{2s}(x-x_2)}{2s |x-x_2|^{1+2s}}u'\left(\displaystyle\frac{x-x_1}{\ep}\right)dx\\&
= \displaystyle\int_\R \displaystyle\frac{\ep^{2s}(\ep y+x_1-x_2)}{2s |\ep y+x_1-x_2|^{1+2s}}u'(y)dy\\&
\simeq  \displaystyle\frac{\ep^{2s}(x_1-x_2)}{2s |x_1-x_2|^{1+2s}} \displaystyle\int_\R u'(y)dy\\&
=\displaystyle\frac{\ep^{2s}(x_1-x_2)}{2s |x_1-x_2|^{1+2s}},
\end{split}\eeqs 
if $x_1\neq x_2$. Hence
\beq\label{wpu2u1p}\displaystyle\frac{1}{\ep^{2s+1}}\displaystyle\int_\R W'(\ued(t,x))\ueu'(t,x)dx\simeq\displaystyle\frac{x_1-x_2}{2s |x_1-x_2|^{1+2s}},\eeq
if $x_1\neq x_2$.
We use again  the substitution $y=(x-x_1)/\ep$, \eqref{intwpup=0} and \eqref{intwppup=0} to get
\beqs\begin{split} 
&\displaystyle\frac{1}{\ep} \displaystyle\int_\R W'(\ueu(t,x)+\ued(t,x))\ueu'(t,x)dx\\&
\simeq \displaystyle\frac{1}{\ep}\displaystyle\int_\R W'\left(u\left(\displaystyle\frac{x-x_1}{\ep}\right)+H(x)+\displaystyle\frac{\ep^{2s}(x-x_2)}{2s W''(0) |x-x_2|^{1+2s}}\right)u'\left(\displaystyle\frac{x-x_1}{\ep}\right)dx\\&
=\displaystyle\int_\R W'\left(u(y)+\displaystyle\frac{\ep^{2s}(\ep y+x_1-x_2)}{2s W''(0) |\ep y+x_1-x_2|^{1+2s}}\right)u'(y)dy\\&
\simeq\displaystyle\int_\R W'(u(y))u'(y)dy+\displaystyle\int_\R W''(u(y))\displaystyle\frac{\ep^{2s}(\ep y+x_1-x_2)}{2s W''(0) |\ep y+x_1-x_2|^{1+2s}}u'(y)dy\\&
\simeq \displaystyle\frac{\ep^{2s}(x_1-x_2)}{2s W''(0) |x_1-x_2|^{1+2s}}\displaystyle\int_\R W''(u(y))u'(y)dy\\&
=0.
\end{split}\eeqs

\noindent We deduce
\beq\label{wpu1u2u1p}
\displaystyle\frac{1}{\ep^{1+2s}}\displaystyle\int_\R W'(\ueu(t,x)+\ued(t,x))\ueu'(t,x)dx\simeq0.
\eeq

\noindent Moreover, we have 
\beq\label{sigmau1p}\begin{split}\displaystyle\frac{1}{\ep}\displaystyle\int_\R \sigma(t,x)\ueu'(t,x)dx&=\displaystyle\int_\R\sigma(t,\ep y+x_1)u'(y)dy\\&
\simeq \sigma(t,x_1)\displaystyle\int_\R u'(y)dy\\&
=\sigma(t,x_1).
\end{split}\eeq

\noindent Finally
\beq\label{u1p2}\displaystyle\frac{1}{\ep}\displaystyle\int_\R (\ueu'(t,x))^2dx=\displaystyle\int_\R (u'(y))^2dy=\gamma^{-1},\eeq
and using \eqref{u'infinity}
\beq\label{u1pu2p}\begin{split}\displaystyle\frac{1}{\ep}\displaystyle\int_\R \ueu'(t,x)\ued'(t,x)dx&\simeq\displaystyle\frac{1}{\ep}\displaystyle\int_\R u'\left(\displaystyle\frac{x-x_1}{\ep}\right)\displaystyle\frac{\ep^{1+2s}}{|x-x_2|^{1+2s}}dx\\&
=\displaystyle\int_\R u'(y)\displaystyle\frac{\ep^{1+2s}}{|\ep y+x_1-x_2|^{1+2s}}dy\\&
\simeq\displaystyle\frac{\ep^{1+2s}}{|x_1-x_2|^{1+2s}}\displaystyle\int_\R u'(y)dy\\& \simeq 0,\end{split}\eeq
if $x_1\neq x_2$. Now we multiply \eqref{dynamsisteqbeforeint} by $\ueu'(t,x)$, we integrate on $\R$ and we use  \eqref{intwpup=0}, \eqref{wpu2u1p}, \eqref{wpu1u2u1p}, \eqref{sigmau1p}, 
\eqref{u1p2} and \eqref{u1pu2p}, to get

\beqs-\gamma^{-1}\dot{x}_1=\displaystyle\frac{x_1-x_2}{2s |x_1-x_2|^{1+2s}}+\sigma(t,x_1).\eeqs
A similar equation is obtained if we multiply  \eqref{dynamsisteqbeforeint} by $\ued'(t,x)$ and integrate on $\R$.
Therefore we get the system

\beq\label{dynamicalsysheursec}\begin{cases} \dot{x}_1=-\gamma\displaystyle\frac{x_1-x_2}{2s |x_1-x_2|^{1+2s}}-\gamma\sigma(t,x_1)\\
\dot{x}_2=-\gamma\displaystyle\frac{x_2-x_1}{2s |x_2-x_1|^{1+2s}}+\gamma\sigma(t,x_2),
\end{cases}\eeq
which is \eqref{dynamicalsysNintro} with $N=2$ and $K=1$. 
This is a heuristic justification of the link between
the partial differential
equation in~\eqref{vepeq} and the system of ordinary differential
equations in~\eqref{dynamicalsysNintro}.

\section{Two transition layers: collision in finite time and proof of 
Theorem~\ref{THM 2}}\label{Tr2}

Let $(x_1(t),x_2(t))$ be the solution of \eqref{dynamicalsysheursec} with initial condition $x_1(0)=x_1^0<x_2(0)=x_2^0$. 
We want to show that under some assumptions on the external force $\sigma$ the time of collision between $x_1(t)$ and
$x_2(t))$ is finite and we also explicitly estimate its value. 
Let us denote
$$\theta(t):=x_2(t)-x_1(t),$$
$$\theta_0:=x_2^0-x_1^0>0,$$ then in an interval $(0,T_c)$, $\theta$ is solution of
\beq\label{thetaeq}\begin{cases}\dot{\theta}=-\displaystyle\frac{\gamma}{s\theta^{2s}}+\gamma\sigma(t,x_1)+\gamma\sigma(t,x_2)\\
\theta(0)=\theta_0>0.
\end{cases}\eeq
Let us first assume 
\beqs\sigma\leq 0.\eeqs
In this particular case, since $\theta$ is subsolution of 
\beq\label{sigmale0thetasol}\begin{cases}\dot{\theta}=-\displaystyle\frac{\gamma}{s\theta^{2s}}\\
\theta(0)=\theta_0,
\end{cases}\eeq
in the set where $\theta$ is positive, we have  $$\theta\leq \tilde{\theta},$$ where 
$$\tilde{\theta}(t):=\left(-\displaystyle\frac{2s+1}{s}\gamma t+\theta_0^{2s+1}\right)^\frac{1}{2s+1}$$
is the solution of \eqref{sigmale0thetasol}. 
The function  $\tilde{\theta}(t)$ vanishes for 
$t=\displaystyle\frac{s\theta_0^{2s+1}}{(2s+1)\gamma}$, therefore also $\theta$ vanishes in a finite time $T_c$ with
\beq\label{Tc2laybound}T_c\leq 
\displaystyle\frac{s\theta_0^{2s+1}}{(2s+1)\gamma}.\eeq
This gives the first claim in Theorem~\ref{THM 2}.\medskip

In the general case where no sign condition is assumed on $\sigma$, $\theta$ is subsolution of 
\beq\label{sigmagenthetasol}\dot{\theta}=-\displaystyle\frac{\gamma}{s\theta^{2s}}+2\gamma\|\sigma\|_\infty.\eeq

\noindent Equation \eqref{sigmagenthetasol} has the stationary solution 
$\theta_s(t):=\left(\displaystyle\frac{1}{2s\|\sigma\|_\infty}\right)^\frac{1}{2s}.$
 Therefore if~\eqref{coo} is satisfied,
since $\theta$ cannot touch $\theta_s$, its derivative remains negative. Hence 
$$\theta\leq\theta_0\quad\text{and}\quad \dot{\theta}<
-\displaystyle\frac{\gamma}{s\theta_0^{2s}}+2\gamma\|\sigma\|_\infty<0.$$ 
As a consequence, 
there exists a finite time $T_c$ such that $\theta(T_c)=0.$ 
More precisely, in this case
$$ \theta(t)\le \theta_0+t\,\left(
-\displaystyle\frac{\gamma}{s\theta_0^{2s}}+2\gamma\|\sigma\|_\infty\right)$$
and therefore
$$ T_c\le\frac{\theta_0}{2\gamma\|\sigma\|_\infty-(\gamma/s\theta_0^{2s})}
=\frac{s\theta_0^{1+2s}}{\gamma\,(2s\theta_0^{2s}\|\sigma\|_\infty-1)}.$$

This proves the second claim of Theorem \ref{THM 2}.

We also stress that if 
condition \eqref{coo}
is not satisfied (i.e. if~$\theta_0$ is not sufficiently small
with respect to the external stress), then~$\theta$ may never vanish
and~$T_c$ could be infinite. This is the case, for instance,   when $\sigma$ is a positive constant and~$\theta_0=1/(2s\sigma)^\frac{1}{2s}$.
This completes the proof of Theorem \ref{THM 2}.
\medskip


\section{Three transition layers: proof of Theorem~\ref{THM 3}}\label{Tr3}
Suppose that  we have three dislocations, two of them moving in the same direction while the central one moving in the opposite direction.
Then, system  \eqref{dynamicalsysNintro} becomes 
\beq\label{dynamicalsysbar3}\begin{cases} \dot{x}_1=\gamma\left(-\displaystyle\frac{x_1-x_2}{2s |x_1-x_2|^{1+2s}}+\displaystyle\frac{x_1-x_3}{2s |x_1-x_3|^{1+2s}}-\sigma(t,x_1)\right)\\
\dot{x}_2=\gamma\left(-\displaystyle\frac{x_2-x_1}{2s |x_2-x_1|^{1+2s}}-\displaystyle\frac{x_2-x_3}{2s |x_2-x_3|^{1+2s}}+\sigma(t,x_2)\right)\\
\dot{x}_3=\gamma\left(\displaystyle\frac{x_3-x_1}{2s |x_3-x_1|^{1+2s}}-\displaystyle\frac{x_3-x_2}{2s |x_3-x_2|^{1+2s}}-\sigma(t,x_3)\right)\\
x_1(0)=x_1^0<x_2(0)=x_2^0<x_3(0)=x_3^0.
\end{cases}\eeq

Let $(x_1(t),x_2(t),x_3(t))$ be the solution of \eqref{dynamicalsysbar3} and let us denote
\begin{equation*}\begin{split}
&\theta_{1}(t):=x_2(t)-x_1(t),\quad \theta_{2}(t):=x_3(t)-x_2(t),\\
&\theta_{1}^0:=x_2^0-x_1^0,\quad \theta_{2}^0:=x_3^0-x_2^0.\end{split}\end{equation*}
Then in the interval $(0,T_c)$, the function
$(\theta_{1},\theta_{2})$ is solution of
\beq\label{thetaeq3}\begin{cases}\dot{\theta}_{1}=\displaystyle\frac{\gamma}{s}\left(-\displaystyle\frac{1}{\theta_{1}^{2s}}+\displaystyle\frac{1}{2(\theta_{1}+\theta_{2})^{2s}}+\displaystyle\frac{1}{2\theta_{2}^{2s}}+\sigma(t,x_1)+\sigma(t,x_2)\right)\\
\dot{\theta}_{2}=\displaystyle\frac{\gamma}{s}\left(\displaystyle\frac{1}{2\theta_{1}^{2s}}+\displaystyle\frac{1}{2(\theta_{1}+\theta_{2})^{2s}}-\displaystyle\frac{1}{\theta_{2}^{2s}}-\sigma(t,x_3)-\sigma(t,x_2)\right)\\
\theta_{1}(0)=\theta_{1}^0>0\\
\theta_{2}(0)=\theta_{2}^0>0.\\
\end{cases}\eeq
Remark that in the particular case $\sigma\equiv 0$ and 
$$\theta_{1}^0=\theta_{2}^0=:\theta_0$$ the solution of system \eqref{thetaeq3} is given by
$$\theta_{1}(t)=\theta_{2}(t)=\theta(t)$$ where $\theta(t)$ is the solution of 

\beq\label{3laysystinitconeq}\begin{cases}\dot{\theta}=-\displaystyle\frac{\gamma}{2^{2s+1}s}\displaystyle\frac{2^{2s}-1}{\theta^{2s}}\\
\theta(0)=\theta_0>0.
\end{cases}\eeq
Integrating \eqref{3laysystinitconeq}, we get the following expression of $\theta$:
\beqs
\theta(t)=\left[\theta_0^{2s+1}-\gamma\displaystyle\frac{2s+1}{s}\displaystyle\frac{2^{2s}-1}{2^{2s+1}}t\right]^\frac{1}{2s+1}.\eeqs
We see that  $\theta$ vanishes at time \begin{equation}\label{T C}
T_c=\displaystyle\frac{2^{2s+1}}{2^{2s}-1}
\displaystyle\frac{s\theta_0^{2s+1}}{(2s+1)\gamma},\end{equation}
and we have a triple collision. 

Let us next show that if $\sigma\equiv 0$, 
for any choice of the initial condition $(x_1^0,x_2^0,x_3^0)$ we have a collision in a finite time,
and also that~$\theta_1$ and $\theta_2$ are order preserving, i.e.
if, for instance, 
\begin{equation}\label{th0}
\theta_1^0< \theta_2^0,\end{equation} then 
\beq\label{thet1lesstheta23lay}\theta_1(t)<\theta_2(t)\eeq
  for any  positive $t$ smaller than the collision time. Indeed, if there exists $t_0$ such that 
$\theta_1(t_0)=\theta_2(t_0)$, and we look at the solution $(\widetilde{\theta}_1(t),\widetilde{\theta}_2(t))$ of system \eqref{thetaeq3}  with initial condition 
$\theta_{1}^0=\theta_{2}^0=\theta_1(t_0)$, then  by the uniqueness of the solution of the system, we have
$$(\theta_1(t+t_0),\theta_2(t+t_0))=(\widetilde{\theta}_1(t),\widetilde{\theta}_2(t))$$ and we know that $\widetilde{\theta}_1(t)=\widetilde{\theta}_2(t)$ for any  $t$ smaller than the collision time. 
This is in contradiction with~\eqref{th0} and it proves~\eqref{thet1lesstheta23lay}.

In turn, inequality \eqref{thet1lesstheta23lay}  implies that $\theta_1(t)$ is subsolution of the equation \eqref{3laysystinitconeq} with initial condition 
$\theta_1(0)=\theta_{1}^0$. Therefore we have 
$$\theta_1(t)\leq \overline{\theta}_1(t):=\left[(\theta_{1}^0)^{2s+1}-\gamma\displaystyle\frac{2s+1}{s}\displaystyle\frac{2^{2s}-1}{2^{2s+1}}t\right]^\frac{1}{2s+1}.$$
In particular,  the collision time $T_c$ of the system \eqref{thetaeq3} is finite and  
$$T_c\leq \displaystyle\frac{2^{2s+1}}{2^{2s}-1}
\displaystyle\frac{s(\;\theta_1^0)^{2s+1}}{(2s+1)\gamma}.$$
Next, since $\theta_1(t)$ is supersolution of the equation \eqref{sigmale0thetasol}, we have
$$\theta_1(t)\geq \underline{\theta}_1(t):=\left[(\theta_1^0)^{2s+1}-\displaystyle\frac{2s+1}{s}\gamma t\right]^\frac{1}{2s+1}$$ and therefore
$$T_c\geq \displaystyle\frac{s\;(\theta_1^0)^{2s+1}}{(2s+1)\gamma}.$$
Finally, suppose that a triple collision occurs at some time~$T_c$.
We want to show that~$\theta_1(t)=\theta_2(t)$ for all~$t\in[0,T_c)$
and determine~$T_c$. For this, suppose, by contradiction, that
\begin{equation}\label{6tg}\theta_1(t_0)<
\theta_2(t_0).\end{equation} 
Then, by considering~$t_0$ the initial time of the flow,
we deduce from~\eqref{thet1lesstheta23lay} that~$\theta_1(t)<\theta_2(t)$
for every~$t\in [t_0,T_c)$. Using this and~\eqref{thetaeq3}, we see that
$$ \dot\theta_2-\dot\theta_1=\frac{\gamma}{s}\left( \frac{3}{2\theta_1^{2s}}
-\frac{3}{2\theta_2^{2s}}\right)>0$$
for every~$t\in [t_0,T_c)$. As a consequence, for any fixed~$a\in(0,T_c)$,
$$ (\theta_2-\theta_1)(T_c-a)
=(\theta_2-\theta_1)(t_0)+\int_{t_0}^{T_c-a} (\dot\theta_2-\dot\theta_1)(t)\,dt
>(\theta_2-\theta_1)(t_0).$$
This and~\eqref{6tg} are in contradiction with the fact that
$$ \lim_{a\to0^+}(\theta_2-\theta_1)(T_c-a)=0,$$
and so we have proved that~$\theta_1(t)=\theta_2(t)$ for all~$t\in[0,T_c)$.
In particular, we have that~$\theta_{1}^0=\theta_{2}^0$ and so the collision
time is determined by~\eqref{T C}.
This completes the proof of
Theorem~\ref{THM 3}.

\begin{rem} {\rm If the three dislocations are not alternated, i.e., $x_1$ and $x_2$ move in the same direction, while $x_3$ in the opposite one, then $x_2$ and $x_3$ collide in a finite time $T_c$ satisfying \eqref{Tc2laybound}. Indeed, in this case the repulsion between $x_1$ and $x_2$ and the attraction between $x_2$ and $x_3$ contribute positively to the collision.
}\end{rem}

\section{$N$ transition layers: some special cases and
proof of Theorems~\ref{AX-0}
and~\ref{tyuxfghk}}\label{TrN}

Now we deal with the case of $N$ transition layers.
Since the general picture can be very rich to describe,
we focus on the cases of small initial configuration and
alternate orientations, and we prove Theorems~\ref{AX-0}
and~\ref{tyuxfghk}.

\subsection{Proof of Theorem~\ref{AX-0}}
We fix $a_0>0$ small enough such that
\beq\label{C6}
-1+(N-2)a_0^{2s}+(N-1)a_0^{2s+1}<0.
\eeq
Let us denote 
$$\theta_{\rm{m}}(t):=\min_{j\neq i}\theta_j(t).$$ 
Of course, no confusion arises between the subscript~${\rm{m}}$,
that denotes this minimization and the indices~$i$ and~$j$.
Also, by~\eqref{AX-2} and~\eqref{C6}, we have that
\beq\label{asmall}
-1+(N-2)\frac{\theta_i^{2s}(0)}{\theta_{\rm{m}}^{2s}(0)}\leq -1+(N-2)a_0^{2s}<0.
\eeq
We want to show that for any $t>0$ 
\beq\label{thetai/thetamt>0}\frac{\theta_i(t)}{\theta_{\rm{m}}(t)} \leq a_0.\eeq
From system \eqref{dynamicalsysthetaN}, we infer that $\theta_i$ satisfies
\beq\label{thetaiderest}\dot{\theta_i}(t)\leq\frac{\gamma}{s}\left(-\frac{1}{\theta_i^{2s}}+\frac{N-2}{\theta_{\rm{m}}^{2s}}\right)
=\frac{\gamma}{s\theta_i^{2s}}\left(-1+(N-2)\frac{\theta_i^{2s}}{\theta_{\rm{m}}^{2s}}\right),\eeq while for any $j\neq i$
\beqs\dot{\theta_j}(t)\geq-\frac{\gamma(N-1)}{s\theta_{\rm{m}}^{2s}}.\eeqs 
{F}rom \eqref{asmall} and~\eqref{thetaiderest} we deduce that 
there exists $T>0$, that we choose maximal,  such that 
\begin{equation}\label{E1}
{\mbox{$\dot{\theta_i}(t)\leq 0$ for any $t\in(0,T)$. }}\end{equation}
Moreover, in $(0,T)$ we have that
\beqs\begin{split}\frac{d}{dt}{\left(\frac{\theta_i}{\theta_j}\right)}&=\frac{\dot{\theta_i}\theta_j-\theta_i\dot{\theta_j}}{\theta_j^2}\\&
\leq \frac{\dot{\theta_i}\theta_{\rm{m}}-\theta_i\dot{\theta_j}}{\theta_j^2}\\&
\leq\frac{\gamma}{s\theta_j^2}\left(-\frac{\theta_{\rm{m}}}{\theta_i^{2s}}+\frac{(N-2)\theta_{\rm{m}}}{\theta_{\rm{m}}^{2s}}+\frac{(N-1)\theta_i}{\theta_{\rm{m}}^{2s}}\right)\\&
=\frac{\gamma \theta_{\rm{m}}}{s\theta_j^2 \theta_i^{2s}}\left(-1+(N-2)\frac{\theta_i^{2s}}{\theta_{\rm{m}}^{2s}}+(N-1)\frac{\theta_i^{2s+1}}{\theta_{\rm{m}}^{2s+1}}\right).
\end{split}\eeqs
Integrating in $(0,t)$ and passing to the minimum  on $j$, we infer that for any $t\in(0,T)$
\beq\label{thetaifracthetam} \frac{\theta_i(t)}{\theta_{\rm{m}}(t)}\leq 
a_0+\min_j\int_0^t
\frac{\gamma \theta_{\rm{m}}(\tau)}{
s\theta^2_j(\tau) \theta_i^{2s}(\tau)}
\left(-1+(N-2)\frac{\theta_i^{2s}(\tau)}{\theta_{\rm{m}}^{2s}(\tau)}+(N-1)\frac{\theta_i^{2s+1}(\tau)}{\theta_{\rm{m}}^{2s+1}(\tau)}\right)d\tau.\eeq
Let us call
$$g(\tau):=-1+(N-2)\frac{\theta_i^{2s}(\tau)}{\theta_{\rm{m}}^{2s}(\tau)}+(N-1)\frac{\theta_i^{2s+1}(\tau)}{\theta_{\rm{m}}^{2s+1}(\tau)}.$$
We observe that~$g(0)<0$, thanks to~\eqref{C6}.
Thus, we want to show that 
\begin{equation}\label{AX-7}
{\mbox{$g(\tau)<0$ for any $\tau\in(0,T)$.}}\end{equation}
Assume by contradiction that this is not true.
Then there exists $t_0\in(0,T)$ such that 
\begin{equation}\label{AX-7bis}{\mbox{$g(\tau)<0$ for $\tau\in(0,t_0)$}}\end{equation} and 
$g(t_0)=0$. Then $\frac{\theta_i(t_0)}{\theta_{\rm{m}}(t_0)}=k$ with 
\begin{equation}\label{ACX-8}
-1+(N-2)k^{2s}+(N-1)k^{2s+1}=0.\end{equation} On the other hand, by~\eqref{thetaiderest} and~\eqref{AX-7bis},
we see that
$$\dot{\theta_i}<\frac{\gamma}{s\theta_i^{2s}}g< 0$$ 
in $(0,t_0)$, and therefore, recalling~\eqref{E1},
we conclude that~$t_0<T$. In particular, we can use~\eqref{thetaifracthetam}
with~$t:=t_0$.

Thus, from~\eqref{thetaifracthetam} and~\eqref{AX-7bis} we infer that 
  $$k=\frac{\theta_i(t_0)}{\theta_{\rm{m}}(t_0)}\le a_0+
\min_j\int_0^t
\frac{\gamma \theta_{\rm{m}}(\tau)}{
s\theta^2_j(\tau) \theta_i^{2s}(\tau)}\,
g(\tau)\,d\tau< a_0.$$ 
This and~\eqref{ACX-8} give that
$$ 0=-1+(N-2)k^{2s}+(N-1)k^{2s+1}<
-1+(N-2)a_0^{2s}+(N-1)a_0^{2s+1}$$
and this is in contradiction with~\eqref{C6}.
Therefore we have completed the proof of~\eqref{AX-7}.

In turn, we see that~\eqref{thetaifracthetam}
and~\eqref{AX-7} imply~\eqref{thetai/thetamt>0}, and thus~\eqref{AX-3}.

Finally \eqref{thetai/thetamt>0} and \eqref{thetaiderest} yield that 
 $$\dot{\theta_i}(t)\leq -\frac{\gamma[1-(N-2)a_0^{2s}]}{s\theta_i^{2s}},$$ and therefore  $\theta_i$ goes to zero in a time $T_c$ satisfying \eqref{Tcthetaismall}.
 Thus the proof of Theorem~\ref{AX-0} is complete.
 
\subsection{Proof of Theorem~\ref{tyuxfghk}}
Without loss of generality, we can assume $\zeta_1=1$. 
Let us first assume $N$ odd. Then $\zeta_N=1$,
and from~\eqref{dynamicalsysNintro} and~\eqref{AXX} we get
\beqs\begin{split} \dot{x}_N-\dot{x}_1&=\frac{\gamma}{2s}\sum_{j=1}^{N-1}\frac{\zeta_j}{(x_N-x_j)^{2s}}+\frac{\gamma}{2s}\sum_{j=2}^{N}\frac{\zeta_j}{(x_j-x_1)^{2s}}\\&
=\frac{\gamma}{2s}\left[\frac{1}{(\theta_1+\dots+\theta_{N-1})^{2s}}-\frac{1}{(\theta_2+\dots+\theta_{N-1})^{2s}}+\dots+\frac{1}{(\theta_{N-2}+\theta_{N-1})^{2s}}-\frac{1}{\theta_{N-1}^{2s}}\right.
\\&
\left.-\frac{1}{\theta_1^{2s}}+\frac{1}{(\theta_1+\theta_{2})^{2s}}-\dots-\frac{1}{(\theta_1+\dots+\theta_{N-2})^{2s}}+\frac{1}{(\theta_1+\dots+\theta_{N-1})^{2s}}\right].
\end{split} \eeqs
So, for every~$\ell\in\{1,\dots,N-2\}$, we introduce the notation
\begin{eqnarray*}
&&\alpha_\ell:=\frac{1}{(\theta_{\ell+1}+\dots+\theta_{N-1})^{2s}}
-\frac{1}{(\theta_{\ell}+\dots+\theta_{N-1})^{2s}} \\ {\mbox{ and }} &&
\beta_\ell:=\frac{1}{(\theta_{1}+\dots+\theta_\ell)^{2s}}
-\frac{1}{(\theta_1+\dots+\theta_{\ell+1})^{2s}}.\end{eqnarray*}
In this way, we have that~$\alpha_\ell$, $\beta_\ell\ge0$ and
\begin{equation}\label{AXAP}
\dot{x}_N-\dot{x}_1=-\frac{\gamma}{2s}\sum_{\ell=1}^{N-2}
(\alpha_\ell+\beta_\ell).\end{equation}
Moreover, for any~$a$, $b\ge0$ and any~$\xi\in[0,1]$ we have that
\begin{equation}\label{XU1}
(a+b)\cdot\frac{(\xi a+b)^{2s-1}}{b^{2s}}\ge (\xi
a+b)\cdot\frac{(\xi a+b)^{2s-1}}{b^{2s}} =\frac{(\xi a+b)^{2s}}{b^{2s}}\ge1.
\end{equation}
Thus, using a Taylor expansion we see that
there exists~$\xi_\ell\in[0,1]$ such that
\begin{equation}\label{XU2}\begin{split}
\alpha_\ell \, &=
\frac{(\theta_{\ell}+\dots+\theta_{N-1})^{2s}-(\theta_{\ell+1}+\dots+\theta_{N-1})^{2s}
}{(\theta_{\ell+1}+\dots+\theta_{N-1})^{2s}
(\theta_{\ell}+\dots+\theta_{N-1})^{2s}} \\
&=\frac{2s(\xi_\ell\theta_{\ell}+\theta_{\ell+1}+\dots+\theta_{N-1})^{2s-1}
\theta_\ell
}{(\theta_{\ell+1}+\dots+\theta_{N-1})^{2s}
(\theta_{\ell}+\dots+\theta_{N-1})^{2s}}
\\ &\ge \frac{2s\theta_\ell
}{(\theta_{\ell}+\dots+\theta_{N-1})^{1+2s}},
\end{split}
\end{equation}
where we have used~\eqref{XU1} here with~$\xi:=\xi_\ell$,
$a:=\theta_\ell$ and~$b:=\theta_{\ell+1}+\dots+\theta_N$.

Similarly, using~\eqref{XU1} with~$a:=\theta_{\ell+1}$ and~$
b:=\theta_1+\dots+\theta_\ell$, we see that
\begin{equation}\label{XU3}
\beta_\ell\ge\frac{2s\theta_{\ell+1}}{(\theta_1+\dots+\theta_{\ell+1})^{1+2s}}.
\end{equation}
{F}rom~\eqref{XU2} and~\eqref{XU3} we obtain that
\begin{equation}\label{XU4}
\alpha_\ell+\beta_\ell\ge\frac{2s\,
(\theta_\ell+\theta_{\ell+1})}{(\theta_1+\dots+\theta_{N-1})^{1+2s}}.
\end{equation}
Now, for any fix $t>0$ let $j(t)\in \{1,\dots,N-1\}$ be such that
$$ \theta_{j(t)}(t)=\max_{j=1,...,N-1}\theta_j(t).$$ 
Then, at time~$t$ we have that
$$ \theta_1+\dots+\theta_{N-1} \le (N-1) \theta_{j(t)}$$
and so~\eqref{XU4} implies that
$$ \alpha_\ell+\beta_\ell\ge\frac{2s\,
(\theta_\ell+\theta_{\ell+1})}{(N-1)\,\theta_{j(t)}\,
(\theta_1+\dots+\theta_{N-1})^{2s}},$$
for every~$\ell\in\{1,\dots,N-2\}$. In particular,
we can choose either~$\ell(t):=j(t)$ (if~$j(t)\ne N-1$)
or~$\ell(t):=j(t)-1$ (if~$j(t)=N-1$) and obtain that
\begin{eqnarray*}
\alpha_{\ell(t)}+\beta_{\ell(t)}&\ge&\frac{2s\,
\theta_{j(t)}}{(N-1)\,\theta_{j(t)}\,
(\theta_1+\dots+\theta_{N-1})^{2s}}\\
&=&\frac{2s}{(N-1)\,
(\theta_1+\dots+\theta_{N-1})^{2s}}.\end{eqnarray*}
This and~\eqref{AXAP} yield that, for any time~$t$ before collisions, we have
\beqs\dot{x}_N-\dot{x}_1\leq  -\frac{\gamma}{(N-1)(x_N-x_1)^{2s}}.\eeqs
 Since the solution of 
 \beqs\begin{cases}\dot{\theta}=-\displaystyle\frac{\gamma}{(N-1)\theta^{2s}}\\
\theta(0)=\theta_0>0
\end{cases}\eeqs
vanishes at the time $t=\frac{(N-1)\theta_0^{2s+1}}{(2s+1)\gamma}$, we can conclude that a collision ocurs at some time $T_c$ with 
$$T_c\leq\frac{(N-1)(x^N_0-x^1_0)^{2s+1}}{(2s+1)\gamma}.$$
The case $N$ even is simpler, thanks to direct cancellations. Indeed in this case,
from~\eqref{dynamicalsysNintro} and~\eqref{AXX}, we have
\beqs\begin{split} \dot{x}_N-\dot{x}_1&=-\frac{\gamma}{2s}\sum_{j=1}^{N-1}\frac{\zeta_j}{(x_N-x_j)^{2s}}+\frac{\gamma}{2s}\sum_{j=2}^{N}\frac{\zeta_j}{(x_j-x_1)^{2s}}\\&
=\frac{\gamma}{2s}\left[-\frac{1}{(\theta_1+\dots+\theta_{N-1})^{2s}}+\dots+\frac{1}{(\theta_{N-2}+\theta_{N-1})^{2s}}-\frac{1}{\theta_{N-1}^{2s}}\right.
\\&
\left.-\frac{1}{\theta_1^{2s}}+\frac{1}{(\theta_1+\theta_{2})^{2s}}-\dots-\frac{1}{(\theta_1+\dots+\theta_{N-1})^{2s}}\right]\\&
\leq -\frac{\gamma}{s(\theta_1+\dots+\theta_{N-1})^{2s}}\\&
=-\frac{\gamma}{s(x_N-x_1)^{2s}}.
\end{split} \eeqs
Therefore, a collision occurs in a time $T_c$ with 
$$T_c\leq \frac{s(x^N_0-x^1_0)^{2s+1}}{(2s+1)\gamma},$$
which completes the proof of Theorem~\ref{tyuxfghk}.


\section{Proof of Theorems \ref{mainthm}
and~\ref{twodislxcprop}}\label{profmainthmsec}

\subsection{Proof of Theorem~\ref{mainthm}}
As in~\cite{gonzalezmonneau, dpv, dfv}, the proof of Theorem \ref{mainthm}
relies on the construction of suitable barriers that allow
the use of Perron's method. Since in our case
the different transitions not need to be all oriented
in the same direction, some care is needed in order to take
into account the cancellations arising from the different
signs of the~$\zeta_i$'s.

More concretely, to prove the asymptotic behavior of $v_\ep$, namely
inequalities \eqref{limsupv^ep} and \eqref{liminfv^ep}, we construct suitable sub and supersolutions of \eqref{vepeq}. 
We consider an auxiliary small parameter $\delta>0$ and define $(\xs_1(t),...,\xs_{N}(t))$ to be the solution of the system 
\beq\label{dynamicalsysbarN}\begin{cases} \dot{\xs}_i=\gamma\left(\displaystyle\sum_{j\neq i}\zeta_i\zeta_j \displaystyle\frac{\xs_i-\xs_j}{2s |\xs_i-\xs_j|^{1+2s}}-\zeta_i\sigma(t,\xs_i)-\zeta_i\delta\right)&\text{in }(0,T_c-t_\delta)\\
 \xs_i(0)=x_i^0-\zeta_i\delta,
\end{cases}\eeq
$i=1,...,N$. Here $T_c$ is the collision time of the system \eqref{dynamicalsysNintro}. If we call $T^\delta_c$ the collision time of  the perturbed system 
\eqref{dynamicalsysbarN},  then 
\begin{equation}\label{fo}
\displaystyle\liminf_{\delta\rightarrow 0^+}T^\delta_c\ge T_c.\end{equation}
To check this, fix~$a\in(0,T_c)$, to be taken arbitrarily small
in the sequel. Then the solution of system~\eqref{dynamicalsysNintro} satisfies
$$ m_a:=\min_{{t\in [0,T_c-a]}\atop{1\le i\ne j\le N}}|x_i(t)-x_j(t)| >0.$$
Accordingly the right hand side of the equation in~\eqref{dynamicalsysNintro}
(together with its derivatives) is bounded when~$t\in [0,T_c-a]$
by a quantity that depends on~$a$.
Therefore, we are in the position to apply the continuity result
of the solution with respect to the parameter~$\delta$: we obtain
that there exists~$\delta_a>0$ such that, when~$\delta\in(0,\delta_a)$
the trajectories of~\eqref{dynamicalsysbarN} lie in a $(m_a/2)$-neighborhood
of the trajectories of~\eqref{dynamicalsysNintro}.
In particular, for any~$\delta\in(0,\delta_a)$, we have that
$$ \min_{{t\in [0,T_c-a]}\atop{1\le i\ne j\le N}}|\overline x_i(t)-\overline x_j(t)|
\ge \frac{m_a}{2}$$
and so the corresponding collision time cannot occur before~$T_c-a$.
That is~$T_c^\delta\ge T_c-a$ for all~$\delta\in(0,\delta_a)$, and so
$$ \liminf_{\delta\to0^+} T_c^\delta\ge T_c-a.$$
By taking~$a$ as close as we wish to~$0$, we obtain~\eqref{fo}.

In light of~\eqref{fo}, for $\delta$ small enough, we have that \eqref{dynamicalsysbarN} is well defined in $(0,T_c-t_\delta)$ 
where $t_\delta\rightarrow 0^+$ as $\delta\rightarrow 0^+$. 
Next,  we set 
\beq\label{xbarpuntoN}\cs_i(t):= \dot{\xs}_i(t),\quad i=1,...,N\eeq
and
\beq\label{barsigmaN}\overline{\sigma}:=\displaystyle\frac{\sigma+\delta}{W''(0)}.\eeq

\noindent Let $u$ and $\psi$ be  respectively the solution of \eqref{u} and \eqref{psi}. We define
\beq\label{vepansbarN}\begin{split}\vs_\ep(t,x)&:=\ep^{2s}\overline{\sigma}(t,x)-K+\displaystyle\sum_{i=1}^N u\left(\zeta_i\displaystyle\frac{x-\xs_i(t)}{\ep}\right)
-\displaystyle\sum_{i=1}^N\zeta_i\ep^{2s}\cs_i(t)\psi\left(\zeta_i\displaystyle\frac{x-\xs_i(t)}{\ep}\right).\end{split}
\eeq
 In order to simplify the notation, 
we set, for $i=1,...,N$
\beq\label{utildeN}\tilde{u}_i(t,x):=u\left(\zeta_i\displaystyle\frac{x-\xs_i(t)}{\ep}\right)-H\left(\zeta_i\displaystyle\frac{x-\xs_i(t)}{\ep}\right),\eeq
and 
\beqs \psi_i(t,x):=\psi\left(\zeta_i\displaystyle\frac{x-\xs_i(t)}{\ep}\right).\eeqs

\noindent  Finally, let
\beq\label{vepansbarbisN}I_\ep:=\ep(\vs_\ep)_t+\displaystyle\frac{1}{\ep^{2s}}(W'(\vs_\ep)-\ep^{2s}\I \vs_\ep-\ep^{2s}\sigma).\eeq

\noindent  The next two propositions show that $\vs_\ep$ is a supersolution of \eqref{vepeq}.
\begin{prop}\label{vbarepsupers} For  any $T<T_c-t_\delta$, there exists $\ep_0>0$ such that for   $0<\ep\leq\ep_0$, we have
$$(\vs_\ep)_t\geq\displaystyle\frac{1}{\ep}\left(\I \vs_\ep-\displaystyle\frac{1}{\ep^{2s}}W'(\vs_\ep)+\sigma(t,x)\right)\quad\text{in }(0,T)\times\R.$$
\end{prop}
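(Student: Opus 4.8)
The plan is to establish the stronger statement that the quantity $I_\ep$ defined in~\eqref{vepansbarbisN} satisfies $I_\ep\ge0$ on $(0,T)\times\R$, since this inequality is exactly the supersolution property after multiplying the desired inequality by $\ep$. Throughout, I fix $T<T_c-t_\delta$ and set $m:=\min_{t\in[0,T],\,i\ne j}|\xs_i(t)-\xs_j(t)|>0$, which is strictly positive because the perturbed system~\eqref{dynamicalsysbarN} has no collision on $[0,T]$; on this interval $\xs_i$, $\cs_i=\dot{\xs}_i$ and $\dot{\cs}_i$ are bounded by a constant depending only on $m$, $\|\sigma\|$ and the data. All the error estimates below are meant uniformly in $(t,x)\in[0,T]\times\R$, and the threshold $\ep_0$ will depend on $T$ (through $m$) and on $\delta$. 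The starting point is the set of scaling identities: since $\I$ commutes with the reflection $x\mapsto-x$ and scales with weight $\ep^{-2s}$, equation~\eqref{u} gives $\ep^{2s}\I[u(\zeta_i(\cdot-\xs_i)/\ep)](x)=W'\big(u(\zeta_i(x-\xs_i)/\ep)\big)$, and similarly~\eqref{psi} controls $\ep^{2s}\I\psi_i$. Combined with the decay bounds~\eqref{uinfinity} and~\eqref{u'infinity} of Lemma~\ref{uinfinitylem} (recall $\kappa>2s$) and the boundedness of $\psi,\psi'$, these reduce the whole computation to a Taylor expansion of $W'$. I then split $\R$ into the balls $B_i(t):=\{|x-\xs_i(t)|<m/4\}$ and their complement $\mathcal O(t)$, and estimate $I_\ep$ separately on each.

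On $B_{i_0}(t)$ the function $\vs_\ep$ from~\eqref{vepansbarN} equals $u(\zeta_{i_0}(x-\xs_{i_0})/\ep)$ up to a perturbation which, by~\eqref{uinfinity}--\eqref{u'infinity} applied to the layers with $i\ne i_0$ (whose argument has modulus $\ge m/(2\ep)$) and by smallness of $\ep^{2s}\bar\sigma$ and $\ep^{2s}\cs_i\psi_i$, is of size $C\ep^{2s}$ with the relevant derivatives controlled. Using periodicity of $W$ (so that $W'$ only sees the fractional part, irrespective of how many integer jumps the other layers contribute near $\xs_{i_0}$), a Taylor expansion of $W'$ around $u(\zeta_{i_0}(\cdot-\xs_{i_0})/\ep)$, the scaling identities, and the equation~\eqref{psi} for $\psi$ — which is precisely designed so that $\mathcal I_s\psi-W''(u)\psi$ cancels the $u'$ term produced by $\ep(\vs_\ep)_t$ while the $\eta(W''(u)-W''(0))$ term (with $\eta$ as in~\eqref{eta}) combines with the $\ep^{2s}\bar\sigma$ term — all the $O(1)$ and $O(\ep^{2s}u')$ contributions cancel. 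What remains at order $\ep^{2s}$ is, up to a positive factor,
$$\gamma^{-1}\zeta_{i_0}\cs_{i_0}+\sum_{j\ne i_0}\zeta_{i_0}\zeta_j\,\frac{\xs_{i_0}-\xs_j}{2s\,|\xs_{i_0}-\xs_j|^{1+2s}}-\zeta_{i_0}\sigma(t,\xs_{i_0})+(\text{errors}),$$
where the interaction terms arise from expanding $\ep^{2s}\I[u(\zeta_j(\cdot-\xs_j)/\ep)]$ and $W'$ of the sum via~\eqref{uinfinity}; the signs $\zeta_{i_0}\zeta_j$ appear automatically. By the perturbed ODE~\eqref{dynamicalsysbarN} this expression is bounded below by $c\,\delta$ for some $c>0$, and since the remaining error terms are $o(\ep^{2s})$ uniformly (here one uses $\kappa>2s$, the $C^{3,\al}$ regularity of $W$, and the Hölder bound on $\sigma_x$ with $\al>s$), one gets $I_\ep\ge c\,\delta\,\ep^{2s}/2>0$ on $B_{i_0}(t)$ once $\ep\le\ep_0$.

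On $\mathcal O(t)$ every layer argument has modulus $\ge m/(4\ep)$, so by~\eqref{uinfinity} each $u(\zeta_i(\cdot-\xs_i)/\ep)$ is within $C\ep^{2s}$ of the integer $H(\zeta_i(\cdot-\xs_i)/\ep)$, hence $\vs_\ep$ is within $C\ep^{2s}$ of the integer $-K+\sum_iH(\zeta_i(x-\xs_i)/\ep)$. Since $W'$ vanishes and $W''$ equals $\beta$ at integers, $W'(\vs_\ep)$ is, to leading order, $\beta$ times the $O(\ep^{2s})$ remainder $\ep^{2s}\bar\sigma+\sum_i\tilde u_i-\sum_i\zeta_i\ep^{2s}\cs_i\psi_i$, with $\tilde u_i$ as in~\eqref{utildeN}; matching this against $\ep^{2s}\I\vs_\ep$ (again using~\eqref{uinfinity}--\eqref{u'infinity}, which encode that $\I$ reproduces $\beta$ times the tail of $u$ through $\I u=W'(u)$) and against $\ep(\vs_\ep)_t=o(\ep^{2s})$, the leading terms cancel and, as on $B_{i_0}$, the $\delta$-shift in $\bar\sigma=(\sigma+\delta)/\beta$ leaves a residual $\ge c\,\delta\,\ep^{2s}/2>0$ for $\ep$ small. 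This region is the easier one, since $W'$ acts essentially as multiplication by $\beta$.

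I expect the main obstacle to be the nonlocal cross-term analysis inside the balls: quantifying $\ep^{2s}\I[u(\zeta_j(\cdot-\xs_j)/\ep)]$ on $B_{i_0}$ up to order $\ep^{2s}$ with the exact coefficient $\zeta_j(x-\xs_j)/(2s|x-\xs_j|^{1+2s})$, uniformly in $(t,x)$, including the contribution of the principal-value integral near the translation centres $\xs_j$; and, relatedly, controlling all Taylor and tail remainders as genuine $o(\ep^{2s})$, so that the fixed gap $c\,\delta\,\ep^{2s}$ dominates. The bookkeeping caused by the mixed orientations $\zeta_i\in\{-1,1\}$ — the feature absent from~\cite{gonzalezmonneau, dpv, dfv} — is kept under control precisely by periodicity of $W$, which makes the local structure near each $\xs_{i_0}$ that of a single layer plus a small correction regardless of the other dislocations' signs.
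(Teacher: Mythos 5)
Your overall strategy coincides with the paper's: reduce the supersolution property to $I_\ep\ge0$ with $I_\ep$ as in \eqref{vepansbarbisN}, Taylor-expand $W'$ around the dominant layer using periodicity, let the equations for $u$ and $\psi$ produce the cancellations, split space into a neighbourhood of each $\xs_{i_0}$ and its complement, use the ODE \eqref{dynamicalsysbarN} to kill the leading interaction term, and let the $\delta$-shift supply the positive margin. However, there is a genuine gap in your order bookkeeping, and as stated the final comparison would fail. The residual left after all cancellations is \emph{not} of size $c\,\delta\,\ep^{2s}$: the $\delta$ enters $I_\ep$ at order $\ep^0$, through the term $W''(0)\overline{\sigma}-\sigma=\delta$ (recall $\overline\sigma=(\sigma+\delta)/W''(0)$ and that $\ep^{-2s}W'(\vs_\ep)$ contains $\ep^{-2s}W''(\tilde u_{i_0})\cdot\ep^{2s}\overline\sigma$, an $O(1)$ quantity). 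Correspondingly, the combination you display is made to vanish \emph{identically} by the ODE (it is the bracket $\ep^{-2s}\sum_{j\ne i_0}\tilde u_j+\overline\sigma+\zeta_{i_0}\cs_{i_0}\eta$, multiplied by the prefactor $W''(\tilde u_{i_0})-W''(0)$), and it is not the source of the margin. This matters because your claim that ``the remaining error terms are $o(\ep^{2s})$'' is false and cannot be made true: several errors are exactly $O(\ep^{2s})$ with constants independent of $\delta$ — for instance $\ep^{2s}\I\overline\sigma$, the quadratic Taylor remainder $\ep^{-2s}O\big((\ep^{2s}\overline\sigma+\sum_{j\ne i_0}\tilde u_j)^2\big)$, and $\ep^{2s}\cs_j^2\psi'_j$ from the time derivative. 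A positive term of size $c\,\delta\,\ep^{2s}$ would be swamped by these for small $\delta$. The argument survives only because the true margin is $\delta+o(1)$, so the errors need only be $o(1)$, which they are; this is exactly how the paper concludes $I_\ep\ge\delta/2$.

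A secondary point: splitting at the fixed scale $m/4$ leaves an intermediate regime $\ep^\gamma\lesssim|x-\xs_{i_0}|\lesssim m$ where you can neither freeze the interaction term $\ep^{-2s}\tilde u_j(x)$ at $x=\xs_{i_0}$ (the error is $O(|x-\xs_{i_0}|)$, not small) nor treat $\tilde u_{i_0}$ as $O(\ep^{2s})$. The product is still $o(1)$ because the prefactor $W''(\tilde u_{i_0})-W''(0)=O(\tilde u_{i_0})=O(\ep^{2s}|x-\xs_{i_0}|^{-2s})$ decays as the freezing error grows, but you must track both factors simultaneously; the paper avoids this interpolation by choosing the splitting scale $\ep^\gamma$ with $0<\gamma<(\kappa-2s)/\kappa$, so that in the inner region the freezing error is $O(\ep^\gamma)$ and in the outer region every $\tilde u_j$ is $O(\ep^{2s(1-\gamma)})$.
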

\begin{prop}\label{leminitialcondinequ} There exists $\delta_0>0$ such that, for every $0<\delta\le\delta_0$, we have
\beqs  \vs_\ep(0,x)\geq v_\ep^0(x)\quad\text{for any }x\in\R.\eeqs
\end{prop}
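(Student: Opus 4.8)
The goal is to show that the initial datum of the approximate supersolution $\vs_\ep$ dominates the true initial datum $v_\ep^0$ pointwise, for $\delta$ small. I would begin by writing out both sides explicitly at $t=0$. Recalling \eqref{vepansbarN}, \eqref{xbarpuntoN} and \eqref{barsigmaN}, at $t=0$ we have $\xs_i(0)=x_i^0-\zeta_i\delta$ and $\overline\sigma(0,x)=(\sigma(0,x)+\delta)/W''(0)$, so that
$$ \vs_\ep(0,x)=\frac{\ep^{2s}(\sigma(0,x)+\delta)}{W''(0)}-K+\sum_{i=1}^N u\!\left(\zeta_i\frac{x-x_i^0+\zeta_i\delta}{\ep}\right)-\ep^{2s}\sum_{i=1}^N\zeta_i\cs_i(0)\,\psi\!\left(\zeta_i\frac{x-x_i^0+\zeta_i\delta}{\ep}\right),$$
while by \eqref{vep0}, $v_\ep^0(x)=\ep^{2s}\sigma(0,x)/W''(0)-K+\sum_i u(\zeta_i(x-x_i^0)/\ep)$. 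Subtracting, the $\sigma$-terms and the constant $-K$ cancel, leaving
$$ \vs_\ep(0,x)-v_\ep^0(x)=\frac{\ep^{2s}\delta}{W''(0)}+\sum_{i=1}^N\left[u\!\left(\zeta_i\frac{x-x_i^0}{\ep}+\frac{\delta}{\ep}\right)-u\!\left(\zeta_i\frac{x-x_i^0}{\ep}\right)\right]-\ep^{2s}\sum_{i=1}^N\zeta_i\cs_i(0)\,\psi_i(0,x).$$

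The key point is that each bracketed difference is nonnegative: since $\zeta_i(x-x_i^0+\zeta_i\delta)=\zeta_i(x-x_i^0)+\delta$ regardless of the sign of $\zeta_i$, the argument of $u$ is always shifted by $+\delta/\ep$, and $u$ is strictly increasing by \eqref{u}. So the choice of initial shift $x_i^0-\zeta_i\delta$ in \eqref{dynamicalsysbarN} is precisely what makes every transition layer move to the ``larger'' side at $t=0$. The plan is then to absorb the last (error) sum into the gain: the quantity $\ep^{2s}|\cs_i(0)|=\ep^{2s}|\dot{\xs}_i(0)|$ is bounded by a constant $C_\delta$ depending on $\delta$ through the right-hand side of \eqref{dynamicalsysbarN} at $t=0$ (which is finite because $x_i^0$ are distinct), and $\psi$ is bounded by the lemma following \eqref{psi}; hence $\bigl|\ep^{2s}\sum_i\zeta_i\cs_i(0)\psi_i(0,x)\bigr|\le C\ep^{2s}$ for a constant $C=C(\delta,N,\|\psi\|_\infty)$. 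Therefore
$$ \vs_\ep(0,x)-v_\ep^0(x)\ \ge\ \frac{\ep^{2s}\delta}{W''(0)}-C\ep^{2s}\ +\ \sum_{i=1}^N\left[u\!\left(\zeta_i\frac{x-x_i^0}{\ep}+\frac{\delta}{\ep}\right)-u\!\left(\zeta_i\frac{x-x_i^0}{\ep}\right)\right].$$
The last sum is $\ge0$, so it suffices that $\delta/W''(0)\ge C$. Here a slightly more careful accounting is needed: the constant $C$ itself depends on $\delta$ (through $\cs_i(0)$), so I would first fix a reference $\delta_1$, note that for $\delta\in(0,\delta_1)$ the shifts $\zeta_i\delta$ stay in a fixed compact set and the distances $|x_i^0-\zeta_i\delta-x_j^0+\zeta_j\delta|$ stay bounded below by a constant $c_0>0$, whence $|\cs_i(0)|\le C_0$ with $C_0$ independent of $\delta\in(0,\delta_1)$. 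Then $C=N C_0\|\psi\|_\infty$ is a genuine constant, and choosing $\delta_0:=\min\{\delta_1,\,C\,W''(0)\}$ gives $\vs_\ep(0,x)\ge v_\ep^0(x)$ for all $x\in\R$ and all $\ep>0$, uniformly, which is the assertion.

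The main obstacle is the circular dependence just flagged: the error term involves $\cs_i(0)=\dot{\xs}_i(0)$, whose size a priori depends on $\delta$, and one must check that this dependence is harmless before matching it against the linear-in-$\delta$ gain. The resolution is the uniform lower bound on the pairwise distances of the perturbed initial points for $\delta$ small, which makes the right-hand side of \eqref{dynamicalsysbarN} bounded independently of $\delta$. Everything else — the cancellation of the $\sigma$ and $-K$ terms, the monotonicity of $u$ giving nonnegativity of the layer differences, and the boundedness of $\psi$ — is immediate from \eqref{u}, \eqref{vep0}, \eqref{vepansbarN} and the cited lemma on $\psi$. I would also remark that, by the analogous construction of a subsolution (with $\delta$ replaced by $-\delta$, i.e.\ shifts $x_i^0+\zeta_i\delta$), the symmetric inequality $\underline v_\ep(0,x)\le v_\ep^0(x)$ holds for $\delta$ small, so Proposition~\ref{leminitialcondinequ} and its subsolution counterpart together provide the ordering of initial data needed to run Perron's method in the proof of Theorem~\ref{mainthm}.
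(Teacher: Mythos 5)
Your setup is correct and matches the structure of the argument the paper delegates to Proposition~8.2 of~\cite{dpv}: the identity $\zeta_i(x-\xs_i(0))=\zeta_i(x-x_i^0)+\delta$, the resulting monotonicity gain in each $u$-term, and the exact cancellation of the $\sigma$ and $-K$ parts are all right. The proof breaks down, however, at the final absorption step. The error term $\ep^{2s}\sum_i\zeta_i\cs_i(0)\psi_i(0,x)$ is of size $C\ep^{2s}$ where, as you yourself establish, $C$ is a genuine constant bounded away from zero uniformly in small $\delta$ (the velocities $\cs_i(0)$ are generically of order one, since the right-hand side of \eqref{dynamicalsysbarN} at $t=0$ does not vanish as $\delta\to0$). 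Matching this against the retained gain $\ep^{2s}\delta/W''(0)$ forces $\delta\ge C\,W''(0)$, i.e.\ a \emph{lower} bound on $\delta$ — the opposite of what the proposition asserts, which must hold for all $\delta\le\delta_0$ precisely because $\delta\to0^+$ is taken at the end of the proof of Theorem~\ref{mainthm}. Your own choice $\delta_0:=\min\{\delta_1,C\,W''(0)\}$ then enforces $\delta\le C\,W''(0)$, which does not imply the sufficient condition $\delta/W''(0)\ge C$ you derived; the conclusion simply does not follow.

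The missing idea is that the layer differences $u(y_i+\delta/\ep)-u(y_i)$ cannot be discarded as merely nonnegative: they are the term that actually beats the $\psi$-error where that error is large. One needs a case analysis in the spirit of the proof of Proposition~\ref{vbarepsupers}. When $x$ is within distance $O(\ep)$ (or $O(\ep^\gamma)$) of some $x_i^0$, the corresponding $\psi_i$ is only $O(1)$, but there $u'$ is bounded below on the relevant compact set, so $u(y_i+\delta/\ep)-u(y_i)$ is bounded below by a positive quantity depending on $\delta/\ep$ that dominates $C\ep^{2s}$. When $x$ is far from all the $x_i^0$ at scale $\ep$, every $\psi_i(0,x)=\psi(y_i+\delta/\ep)$ is itself small because $\psi(\pm\infty)=0$ (and, quantitatively, one uses the decay of $\psi$ together with \eqref{uinfinity}, which shows that the $u$-differences still contribute a positive amount of order $\ep^{2s}$ via the tail expansion $u(y)\approx H(y)-\tfrac{1}{2sW''(0)}y|y|^{-2s}$). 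Only then can the residual be absorbed by $\ep^{2s}\delta/W''(0)$ for \emph{small} $\delta$. Without this dichotomy the estimate fails, so the argument as written has a genuine gap rather than a fixable constant-tracking issue.
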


We have the following asymptotic behavior for $\vs_\ep$:
\begin{lem}\label{ansatzlimitN} For any $(t,x)\in [0,T_c) \times\R$, we have that 
\beqs \displaystyle\lim_{\delta\rightarrow 0^+}\displaystyle\limsup_{(t',x')\rightarrow (t,x)\atop\ep\rightarrow 0^+}\vs_\ep(t',x')\leq (v_0)^*(t,x).
\eeqs
\end{lem}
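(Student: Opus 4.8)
The plan is to analyze the defining formula~\eqref{vepansbarN} for~$\vs_\ep$ term by term and show that, in the double limit~$\ep\to0^+$ followed by~$\delta\to0^+$, only the sum of Heaviside functions survives, while the correctors vanish. First I would note that~$\ep^{2s}\overline\sigma(t,x)\to0$ uniformly as~$\ep\to0^+$ (since~$\sigma$ is bounded by~\eqref{sigmaassump}), and similarly that each term~$\zeta_i\ep^{2s}\cs_i(t)\psi_i(t,x)$ tends to~$0$: indeed~$\psi\in L^\infty(\R)$ by the second lemma in the toolbox, and~$\cs_i(t)=\dot{\xs}_i(t)$ is bounded on~$[0,T]$ for any~$T<T_c-t_\delta$ because the right-hand side of~\eqref{dynamicalsysbarN} stays bounded away from the collision (the argument used to prove~\eqref{fo}). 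Hence the contribution of these two pieces to~$\limsup$ is~$0$, uniformly for~$(t',x')$ near a fixed~$(t,x)$ and~$\ep$ small, and independently of~$\delta$.

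Next I would treat the main term~$\sum_{i=1}^N u\big(\zeta_i(x-\xs_i(t))/\ep\big)-K$. Using the estimate~\eqref{uinfinity} from Lemma~\ref{uinfinitylem}, for~$x\ne\xs_i(t)$ we have~$u\big(\zeta_i(x-\xs_i(t))/\ep\big)\to H\big(\zeta_i(x-\xs_i(t))\big)$ as~$\ep\to0^+$, with the convergence locally uniform away from the point~$\xs_i(t)$ (the correction term~$\ep^{2s}(x-\xs_i(t))/(2s\beta|x-\xs_i(t)|^{1+2s})$ is~$O(\ep^{2s})$ at fixed distance from~$\xs_i(t)$, and~$u$ is monotone and bounded between~$0$ and~$1$ everywhere). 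The delicate issue is the behaviour near the points~$\xs_i(t)$: there~$u$ ranges continuously in~$[0,1]$, so the best uniform bound is~$u(\cdot)\le1$, which gives
\[
\vs_\ep(t',x')\le \ep^{2s}\overline\sigma(t',x')+\sum_{i=1}^N H\big(\zeta_i(x'-\xs_i(t'))\big)+o(1)+\big(\text{terms}\le1\text{ near }\xs_i\big)-K,
\]
and one must check that this upper bound, passed to the limit, does not exceed~$(v_0)^*(t,x)$. The key point is that at a point~$x$ which is \emph{not} one of the limiting positions~$x_i(t)$, for~$\delta$ small enough all the~$\xs_i(t')$ with~$(t',x')$ near~$(t,x)$ stay at positive distance from~$x'$, so each summand genuinely converges to the corresponding Heaviside value and we get exactly~$v_0(t,x)\le(v_0)^*(t,x)$; whereas at a point~$x=x_{i_0}(t)$ the contribution of the~$i_0$-th layer is at most~$1$, which is precisely accounted for by the upper semicontinuous envelope~$(v_0)^*$, since~$H$ jumps by~$1$ there. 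Here I would use that~$\xs_i(t)\to x_i(t)$ as~$\delta\to0^+$ (continuous dependence on~$\delta$, again as in the proof of~\eqref{fo}), which is why the~$\delta$-limit is taken \emph{outside} the~$\ep$-limsup: it lets us first absorb the~$\delta$-perturbation of the trajectories and then compare with the unperturbed envelope.

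The main obstacle, and the step requiring the most care, is precisely this interchange of limits near the interface points: one has to verify that
\[
\limsup_{\delta\to0^+}\ \limsup_{{(t',x')\to(t,x)}\atop{\ep\to0^+}}\ \sum_{i=1}^N H\big(\zeta_i(x'-\xs_i^\delta(t'))\big)\ \le\ (v_0)^*(t,x),
\]
which is a statement purely about sums of shifted Heaviside functions whose shift parameters converge. This follows because~$H$ is upper semicontinuous after the normalization~$H(0)$ irrelevant, each~$\xs_i^\delta(t')\to x_i(t)$ as~$(t',x',\delta)\to(t,x,0^+)$, and a finite sum of upper semicontinuous functions composed with converging arguments has~$\limsup$ bounded by the upper envelope of the limiting sum; I would spell this out by a short elementary argument distinguishing, for each~$i$, whether~$x_i(t)$ lies to the left of, to the right of, or equal to~$x$. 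Once this is established, combining it with the vanishing of the corrector terms from the first paragraph yields the claimed inequality, completing the proof of Lemma~\ref{ansatzlimitN}.
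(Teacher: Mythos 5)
Your argument is correct. Note that the paper does not actually prove this lemma: it defers to Lemma~8.1 of~\cite{dpv}, and your term-by-term analysis of the ansatz~\eqref{vepansbarN} (the correctors $\ep^{2s}\overline\sigma$ and $\ep^{2s}\cs_i\psi_i$ are $O(\ep^{2s})$ since $\psi$ is bounded and $\dot{\xs}_i$ is bounded on compact subsets of $[0,T_c-t_\delta)$; each layer converges to its Heaviside profile away from $\xs_i$ and is bounded by $1$ near it; the $\delta$-perturbation of the trajectories is absorbed by the outer limit) is exactly the expected argument. The one step that genuinely requires the care you indicate is the last one: bounding the $\limsup$ of the sum by the sum of the individual upper envelopes is only admissible here because the points $x_1(t),\dots,x_N(t)$ are distinct for $t<T_c$, so that at most one index $i_0$ can satisfy $x_{i_0}(t)=x$ and contribute the extra $+1$ that $(v_0)^*$ then accounts for; in general the sum of upper envelopes may strictly exceed the upper envelope of the sum, so your case-by-case check on the position of each $x_i(t)$ relative to $x$ is not optional but is precisely what closes the proof.
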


The proof  of Proposition \ref{vbarepsupers}
is postponed to the next Section~\ref{post},
to avoid interruptions in the flow of the main arguments, while for the proofs of Lemma \ref{ansatzlimitN} and Proposition \ref{leminitialcondinequ}  we  refer respectively to the proofs of Lemma 8.1 and Proposition 8.2 in \cite{dpv}. 

Let us now conclude the proof of Theorem \ref{mainthm}.
First remark that, for $\ep$ sufficiently small, the initial condition $v^0_\ep$ given in \eqref{vep0} satisfies
$$-(N+1)\leq v^0_\ep\leq N+1.$$ Moreover the functions
$$\underline{u}_\ep(t,x):=-(N+1)-K_\ep t\quad\text{and}\quad \overline{u}_\ep(t,x):=N+1+K_\ep t$$ 
where $$K_\ep:=\displaystyle\frac{1}{\ep^{1+2s}}\|W'\|_{L^\infty(\R)}+\displaystyle\frac{1}{\ep}\|\sigma\|_{L^\infty(\R)},$$
are respectively sub and supersolution of \eqref{vepeq}. Hence, the existence of a unique, continuous  solution $v_\ep$ of \eqref{vepeq} is guaranteed  by the Perron's method and the comparison principle.

Next, from Propositions \ref{vbarepsupers} and \ref{leminitialcondinequ},  and the comparison principle, for any $T<T_c$ there exist $\delta_0$ and $\ep_0$ such that for 
$0<\delta\leq\delta_0$ and $0<\ep\leq\ep_0$, we have
\beq\label{vep<vepbar}v_\ep(t,x)\leq \vs_\ep(t,x)\quad\text{for any }(t,x)\in(0,T)\times\R.\eeq Passing to the limit as $\ep\rightarrow 0^+$, recalling Lemma \ref{ansatzlimitN}
and taking $\delta$ as small as we wish in the end, we get \eqref{limsupv^ep} for any $(t,x)\in [0,T_c)\times\R$. 

Similarly, to prove  \eqref{liminfv^ep}, for 
$\delta>0$ small, we  define $(\underline{x}_1(t),...,\underline{x}_{N}(t))$ to be the solution of the system 
\beq\label{dynamicalsysunderbarN}\begin{cases} \dot{\underline{x}}_i=\gamma\left(\displaystyle\sum_{j\neq i}\zeta_i\zeta_j \displaystyle\frac{\underline{x}_i-\underline{x}_j}{2s |\underline{x}_i-\underline{x}_j|^{1+2s}}-\zeta_i\sigma(t,\underline{x}_i)+\zeta_i\delta\right)&\text{in }(0,T_c-t_\delta)\\
 \underline{x}_i(0)=x_i^0+\zeta_i\delta,
\end{cases}\eeq
$i=1,...,N$, and 

\beqs\begin{split}\underline{v}_\ep(t,x)&:=\ep^{2s}\frac{\sigma(t,x)-\delta}{W''(0)}-K+\displaystyle\sum_{i=1}^N u\left(\zeta_i\displaystyle\frac{x-\underline{x}_i(t)}{\ep}\right)
-\displaystyle\sum_{i=1}^N\zeta_i\ep^{2s}\dot{\underline{x}}_i(t)\psi\left(\zeta_i\displaystyle\frac{x-\underline{x}_i(t)}{\ep}\right).\end{split}
\eeqs
Then, one can prove that $\underline{v}_\ep$ is a subsolution of \eqref{vepeq}  and therefore
\beq\label{veplessvundelinep} v_\ep(t,x)\geq \underline{v}_\ep(t,x)\quad\text{for any }(t,x)\in(0,T)\times\R,\eeq and any $T<T_c$, and any $\delta$ and $\ep$ small enough. Passing to the limit as $\ep\rightarrow 0^+$ and then letting
$\delta\rightarrow 0^+$, we get   \eqref{liminfv^ep}, thus
ending the proof of Theorem \ref{mainthm}.


\subsection{Proof of Theorem~\ref{twodislxcprop}}
Let us take a sequence $(T_k)_k$ such that $T_k\rightarrow T_c^-$ as $k\rightarrow+\infty$. Then, from \eqref{veplessvundelinep} with $N=2$ and $K=1$, there exist
$\delta^0_k$ and $\ep^0_k$ such that for any $\delta\in(0,\delta^0_k]$ and $\ep\in(0,\ep^0_k]$ we have
\begin{equation}\label{5.10bis}
\begin{split} v_\ep(T_k,x_c)&\geq O(\ep^{2s})+u\left(\displaystyle\frac{x_c-\underline{x}_1(T_k)}{\ep}\right)+u\left(\displaystyle\frac{\underline{x}_2(T_k)-x_c}{\ep}\right)-1\\&
-\ep^{2s}\dot{\underline{x}}_1(T_k)\psi\left(\displaystyle\frac{x_c-\underline{x}_1(T_k)}{\ep}\right)+\ep^{2s}\dot{\underline{x}}_2(T_k)\psi\left(\displaystyle\frac{\underline{x}_2(T_k)-x_c}{\ep}\right).
\end{split}\end{equation}
We remark that~$x_1(t)<x_2(t)$ for any~$t\in(0,T_c)$, and that both~$x_1(t)$ and~$x_2(t)$
approach~$x_c$ as~$t\to T_c^-$. Consequently, by~\eqref{dynamicalsysNintro},
we see that
$$ \dot{x}_1\ge\gamma\left(
\frac{1}{2s (x_2-x_1)|^{2s}}-\|\sigma_x\|_{L^\infty([0,+\infty)\times\R)}\right)
\to+\infty$$
as~$t\to T_c^-$. Similarly, we have that~$\dot{x}_2\to-\infty$
as~$\to T_c^-$.

We deduce that~$x_1$ is definitely incrasing in time, and~$x_2$
definitely decreasing. In particular, we have that~$x_1(t)<x_c<x_2(t)$
when~$t$ is close enough to~$T_c$ (and so for~$t=T_k$ and~$k$ large enough).

Therefore, we can take~$\delta=\delta_k>0$ sufficiently small that
\begin{equation*}
{\underline{x}}_1(T_k) < x_c < {\underline{x}}_2(T_k).\end{equation*}
As a consequence,
we can choose $\ep=\ep_k>0$ so small that $$\frac{x_c-\underline{x}_1(T_k)}{\ep_k},\,\frac{\underline{x}_2(T_k)-x_c}{\ep_k}\rightarrow +\infty\quad\text{as }k\rightarrow+\infty.$$
Then, by~\eqref{dynamicalsysunderbarN}, we have that
$$\ep_k^{2s}\dot{\underline{x}}_1(T_k)=\frac{\gamma}{2s\left(\frac{\underline{x}_2(T_k)-x_c}{\ep_k}+\frac{x_c-\underline{x}_1(T_k)}{\ep_k}\right)^{2s}}+O(\ep_k^{2s})\rightarrow 0\quad\text{as }
k\rightarrow+\infty.$$ Similarly 
$$\ep_k^{2s}\dot{\underline{x}}_2(T_k)\rightarrow 0\quad\text{as }k\rightarrow+\infty.$$
Thus, recalling~\eqref{5.10bis}, we infer that 
$$ \limsup_{k\rightarrow+\infty}v_{\ep_k}(T_k,x_c)\geq 1.$$ This implies that 
$$\displaystyle\limsup_{t\rightarrow T_c^-\atop\ep\rightarrow 0^+} v_\ep(t,x_c)\geq1,$$ which concludes the proof of Theorem~\ref{twodislxcprop}.

\subsection{Proof of Proposition \ref{vbarepsupers}}\label{post}
Let us start with the following
\begin{lem} For any $T<T_c-t_\delta$  in $(0,T)\times\R$ we have,  for $i=1,...,N$ 
\beq\label{ieplemN}\begin{split}I_\ep&=O(\tilde{u}_i)(\ep^{-2s}\displaystyle\sum_{j\neq i}\tilde{u}_j+\overline{\sigma}+\zeta_i\cs_i\eta)+\delta\\&
+\displaystyle\sum_{j\neq i}\left(O(\psi_j)+O(\tilde{u}_j)+O(\ep^{-2s}\tilde{u}_j^2)\right)+O(\ep^{2s}),
\end{split} 
\eeq
where $O(\ep^{2s})$ depends on $T$ and $\delta$.
\end{lem}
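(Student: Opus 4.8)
The plan is to carry out the expansion of $I_\ep$ directly from its definition in~\eqref{vepansbarbisN}, treating each of the four contributions — the parabolic term $\ep(\vs_\ep)_t$, the potential term $\ep^{-2s}W'(\vs_\ep)$, the nonlocal term $\I\vs_\ep$, and the external stress $\sigma$ — separately, and then collecting the errors. First I would compute $(\vs_\ep)_t$ by differentiating~\eqref{vepansbarN} in $t$: each layer $u(\zeta_i(x-\xs_i(t))/\ep)$ contributes $-\zeta_i\cs_i(t)\,\ep^{-1}u'(\zeta_i(x-\xs_i)/\ep)$, and the $\psi$-terms contribute quantities of order $\ep^{2s}$ (using that $\cs_i$ and $\dot{\cs}_i$ are bounded on $[0,T]$, which holds because $T<T_c-t_\delta$ keeps the particles separated, so the right-hand side of~\eqref{dynamicalsysbarN} and its derivatives are bounded) together with $\ep^{2s-1}\dot\cs_i\psi_i$. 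Since $\ep(\vs_\ep)_t$ carries an extra factor $\ep$, the leading piece is $-\sum_i\zeta_i\cs_i u'(\zeta_i(x-\xs_i)/\ep)$ and the remainders are $O(\ep^{2s})$ plus $O(\psi_j)$-type terms once multiplied through.

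Next I would expand $\ep^{-2s}W'(\vs_\ep)$. Fix the index $i$ and write $\vs_\ep = u\bigl(\zeta_i(x-\xs_i)/\ep\bigr) + r_i$, where $r_i$ collects everything else: the $\ep^{2s}\overline\sigma$ term, the $-K$, the other layers $\sum_{j\ne i}u(\zeta_j(x-\xs_j)/\ep)$, and all the $\psi$-corrections. Using the periodicity of $W$ and $W(v+1)=W(v)$, I replace the far-away layers by their Heaviside limits plus the $\tilde u_j$ corrections from~\eqref{utildeN}, so that modulo integers $r_i = \ep^{2s}\overline\sigma + \sum_{j\ne i}\tilde u_j - \sum_j \zeta_j\ep^{2s}\cs_j\psi_j + (\text{integer})$. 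A Taylor expansion of $W'$ about $u(\zeta_i(x-\xs_i)/\ep)$ gives
$$
W'(\vs_\ep) = W'\!\left(u\!\left(\tfrac{\zeta_i(x-\xs_i)}{\ep}\right)\right) + W''\!\left(u\!\left(\tfrac{\zeta_i(x-\xs_i)}{\ep}\right)\right) r_i + O(r_i^2).
$$
The first term equals $\I u(\zeta_i(x-\xs_i)/\ep) = \ep^{2s}\I[u(\zeta_i(\cdot-\xs_i)/\ep)](x)$ by the equation~\eqref{u} for $u$, which is exactly what will cancel the leading part of $\ep^{2s}\I\vs_\ep$. The cross term $W''(u)r_i$, after dividing by $\ep^{2s}$, produces $W''(u)\overline\sigma$, $\ep^{-2s}W''(u)\sum_{j\ne i}\tilde u_j$, and $-\sum_j\zeta_j\cs_j W''(u)\psi_j$; near the $i$th layer $W''(u)$ is comparable to $W''(0)=\beta$ up to $O(\tilde u_i)$, which is how the structure $O(\tilde u_i)(\ep^{-2s}\sum_{j\ne i}\tilde u_j + \overline\sigma + \zeta_i\cs_i\eta)$ and the isolated $O(\psi_j)$, $O(\tilde u_j)$, $O(\ep^{-2s}\tilde u_j^2)$ terms emerge — here one uses that $\psi$ solves~\eqref{psi} and the definition~\eqref{eta} of $\eta$ to absorb the $u'$ and $\psi$ pieces into the combination $\zeta_i\cs_i\eta$. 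The quadratic remainder $\ep^{-2s}O(r_i^2)$ expands into $O(\ep^{2s})$, cross terms $O(\tilde u_i \overline\sigma)$, and the $O(\ep^{-2s}\tilde u_j^2)$ terms.

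Then I would handle $\ep^{2s}\I\vs_\ep$: by linearity it splits into $\ep^{2s}$ times the sum of the fractional Laplacians of each layer plus the $\ep^{2s}\overline\sigma$ term plus the $\psi$-terms. The $i$th layer gives $\ep^{2s}\I[u(\zeta_i(\cdot-\xs_i)/\ep)] = \I u$ evaluated at the rescaled point, which cancels against $W'(u(\cdot))$ from the previous step. The $\ep^{2s}\I\overline\sigma$ and $\ep^{2s}\I\psi_i$ contributions are $O(\ep^{2s})$ using the regularity~\eqref{sigmaassump} of $\sigma$ and the boundedness of $\psi$, $\psi'$; the fractional Laplacians of the other layers $u(\zeta_j(\cdot-\xs_j)/\ep)$ at a point near $\xs_i$ are, by the decay estimate~\eqref{u'infinity} and~\eqref{uinfinity} from Lemma~\ref{uinfinitylem}, of the form $O(\ep^{2s})$ — indeed $\I$ of a layer far from its center decays, and after multiplying by $\ep^{2s}$ one gets a genuine $O(\ep^{2s})$ with constants depending on the (positive, $t$-uniform on $[0,T]$) separation of the particles, hence on $T$ and $\delta$. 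Finally $\ep^{2s}\sigma$ combines with the $W''(0)\overline\sigma = \sigma+\delta$ produced above to leave precisely the isolated $\delta$ appearing in~\eqref{ieplemN}. I expect the main obstacle to be the careful bookkeeping of which error terms are genuinely $O(\ep^{2s})$ versus which must be retained in the sharper forms $O(\tilde u_i)(\cdots)$ or $O(\ep^{-2s}\tilde u_j^2)$: the point is that near $\xs_i$ the quantity $\tilde u_i$ is small but $\ep^{-2s}\tilde u_j$ for $j\ne i$ need not be, so the Taylor expansions must be centered correctly and the cancellations from the equations~\eqref{u},~\eqref{psi} and from $\int W''(u)u'=0$, $\int W'(u)u'=0$ (formulas~\eqref{intwpup=0},~\eqref{intwppup=0}) must be used at exactly the right order; this is the same delicate point as in~\cite{dpv} but now complicated by the signs $\zeta_i$, so one has to check that no sign mismatch spoils the cancellation of the leading $u'$ terms against the $\psi$ terms.
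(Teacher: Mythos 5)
Your overall route is the paper's: expand $\ep(\vs_\ep)_t$, Taylor-expand $W'(\vs_\ep)$ around the $i$th layer, use the equations for $u$ and $\psi$ to cancel the $u'$ terms, and use $W''(0)\overline\sigma-\sigma=\delta$ to isolate the $\delta$. However, there is one genuine gap: your treatment of the nonlocal contributions of the layers $j\ne i$. You dismiss $\I\big[u\big(\zeta_j\tfrac{\cdot-\xs_j}{\ep}\big)\big](x)$ for $x$ near $\xs_i$ as an $O(\ep^{2s})$ error, but in $I_\ep$ this term enters as $-\I\vs_\ep$ (the $\ep^{2s}$ in \eqref{vepansbarbisN} cancels), i.e. as
$$-\ep^{-2s}(\I u)\Big(\zeta_j\tfrac{x-\xs_j}{\ep}\Big)=-\ep^{-2s}W'(\tilde u_j)=-W''(0)\,\ep^{-2s}\tilde u_j+O(\ep^{-2s}\tilde u_j^2),$$
and since $\tilde u_j\sim \ep^{2s}$ at fixed distance from $\xs_j$, the quantity $\ep^{-2s}\tilde u_j$ is $O(1)$, not $O(\ep^{2s})$ and not $O(\tilde u_j)$. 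This $O(1)$ piece is exactly the dislocation interaction; it cannot be thrown into any error bucket of \eqref{ieplemN}. The paper keeps it and cancels it against the cross term $\ep^{-2s}W''(\tilde u_i)\sum_{j\ne i}\tilde u_j$ coming from your linearization of $W'(\vs_\ep)$, leaving $\ep^{-2s}\sum_{j\ne i}(W''(\tilde u_i)-W''(0))\tilde u_j=O(\tilde u_i)\,\ep^{-2s}\sum_{j\ne i}\tilde u_j$, which is precisely the first term of \eqref{ieplemN}. If you discard the $j\ne i$ nonlocal terms, the uncancelled $W''(\tilde u_i)\,\ep^{-2s}\sum_{j\ne i}\tilde u_j$ survives as an $O(1)$ quantity with no smallness or sign, the stated formula fails, and the subsequent supersolution argument (which needs the bracket multiplying $O(\tilde u_i)$ to be controlled by the ODE \eqref{dynamicalsysbarN}) breaks down.

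A secondary slip in your closing discussion: near $\xs_i$ it is $\tilde u_i$ that is \emph{not} small (it equals $u-H$ evaluated near the origin, of size up to $1/2$), while $\tilde u_j$ for $j\ne i$ is $O(\ep^{2s})$ there; you state the opposite. This matters because the factor $O(\tilde u_i)$ in \eqref{ieplemN} is $O(1)$ near $\xs_i$, and it is the bracket $\ep^{-2s}\sum_{j\ne i}\tilde u_j+\overline\sigma+\zeta_i\cs_i\eta$ that must be made small there via the choice of the ODE for $\xs_i$. Also, the identities \eqref{intwpup=0} and \eqref{intwppup=0} belong to the heuristic derivation of Section 2.2 and play no role in this pointwise computation.
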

\begin{proof}

Fix $i=1,...,N$. We have
\beq\label{vepbartN}\begin{split}\ep(\vs_\ep)_t&=\ep^{2s+1}\overline{\sigma}_t-\displaystyle\sum_{j=1}^N \zeta_j\cs_ju'\left(\zeta_j\displaystyle\frac{x-\xs_j}{\ep}\right)\\&
+\displaystyle\sum_{j=1}^N\left(-\zeta_j\ep^{2s+1}\dot{\cs}_j\psi\left(\zeta_j\displaystyle\frac{x-\xs_j}{\ep}\right)+\zeta_j\ep^{2s}\cs_j^2\psi'\left(\zeta_j\displaystyle\frac{x-\xs_j}{\ep}\right)\right)\\&
=-\displaystyle\sum_{j=1}^N\zeta_j\cs_ju'\left(\zeta_j\displaystyle\frac{x-\xs_j}{\ep}\right)+O(\ep^{2s}).\end{split}
\eeq

Next, using the periodicity of $W$ and a Taylor expansion of $W'$ at $\tilde{u}_i$, we compute:
\begin{equation}\label{wpvbarN}\begin{split}\ep^{-2s} W'(\vs_\ep)&=\ep^{-2s}W'\left(\ep^{2s}\overline{\sigma}+\displaystyle\sum_{j\neq i}\tilde{u}_j+\tilde{u}_i-\displaystyle\sum_{j\neq i}\zeta_j\ep^{2s}\cs_j\psi_j-\zeta_i\ep^{2s}\cs_i\psi_i\right)\\&
=\ep^{-2s}W'(\tilde{u}_i)+\ep^{-2s}W''(\tilde{u}_i)(\ep^{2s}\overline{\sigma}+\displaystyle\sum_{j\neq i}\tilde{u}_j-\displaystyle\sum_{j\neq i}\zeta_j\ep^{2s}\cs_j\psi_j-\zeta_i\ep^{2s}\cs_i\psi_i)\\&
+\displaystyle\sum_{j\neq i}O(\ep^{-2s}\tilde{u}_j^2)+O(\ep^{2s}).
\end{split}\eeq

Finally, we evaluate
\begin{equation}\label{ivbarN}\begin{split}\I \vs_\ep&=\ep^{2s}\I \overline{\sigma}+\ep^{-2s}\displaystyle\sum_{j\neq i}\I u\left(\zeta_j\displaystyle\frac{x-\xs_j}{\ep}\right)
+\ep^{-2s}\I u\left(\zeta_i\displaystyle\frac{x-\xs_i}{\ep}\right)\\&
-\displaystyle\sum_{j\neq i}\zeta_j\cs_j\I\psi\left(\zeta_j\displaystyle\frac{x-\xs_j}{\ep}\right)-\zeta_i\cs_i\I\psi\left(\zeta_i\displaystyle\frac{x-\xs_i}{\ep}\right)\\&
=O(\ep^{2s})+\ep^{-2s}\displaystyle\sum_{j\neq i}W'(\tilde{u}_j)+\ep^{-2s}W'(\tilde{u}_i)\\&
-\displaystyle\sum_{j\neq i}\zeta_j\cs_j\left[W''(\tilde{u}_j)\psi_j+u'\left(\zeta_j\displaystyle\frac{x-\xs_j}{\ep}\right)+\eta(W''(\tilde{u}_j)-W''(0))\right]\\&
-\zeta_i\cs_i\left[W''(\tilde{u}_i)\psi_i+u'\left(\zeta_i\displaystyle\frac{x-\xs_i}{\ep}\right)+\eta(W''(\tilde{u}_i)-W''(0))\right].
\end{split}\eeq

\noindent Summing \eqref{vepbartN}, \eqref{wpvbarN} and \eqref{ivbarN}, and noticing that the terms involving $u'$, and the term $$\ep^{-2s}W'(\tilde{u}_i)-\zeta_i\cs_i W''(\tilde{u}_i)\psi_i$$
appearing in both \eqref{wpvbarN} and \eqref{ivbarN}, 
 cancel, we get
 \beqs\begin{split}I_\ep&=\ep(\vs_\ep)_t+\ep^{-2s}W'(\vs_\ep)-\I \vs_\ep-\sigma\\&
=-\ep^{-2s}\displaystyle\sum_{j\neq i}W'(\tilde{u}_j)+W''(\tilde{u}_i)\left(\overline{\sigma}+\ep^{-2s}\displaystyle\sum_{j\neq i}\tilde{u}_j\right)+\displaystyle\sum_{j\neq i}\zeta_j\cs_j(W''(\tilde{u}_j)-W''(\tilde{u}_i))\psi_j\\&
+\displaystyle\sum_{j\neq i}\zeta_j\cs_j\eta(W''(\tilde{u}_j)-W''(0))+\zeta_i\cs_i\eta(W''(\tilde{u}_i)-W''(0))-\sigma+\displaystyle\sum_{j\neq i}O(\ep^{-2s}\tilde{u}_j^2)+O(\ep^{2s}).
\end{split}\eeqs

\noindent Now, since $W'(0)=0$, we use a Taylor expansion of $W'$ around 0, to see that
$$\ep^{-2s}\displaystyle\sum_{j\neq i}W'(\tilde{u}_j)=\ep^{-2s}\displaystyle\sum_{j\neq i}W''(0)\tilde{u}_j+\displaystyle\sum_{j\neq i}O(\ep^{-2s}\tilde{u}_j^2),$$ so that
\beqs\begin{split}I_\ep&=-\ep^{-2s}\displaystyle\sum_{j\neq i}W''(0)\tilde{u}_j+W''(\tilde{u}_i)\left(\overline{\sigma}+\ep^{-2s}\displaystyle\sum_{j\neq i}\tilde{u}_j\right)
+\displaystyle\sum_{j\neq i}\zeta_j\cs_j(W''(\tilde{u}_j)-W''(\tilde{u}_i))\psi_j\\&
+\displaystyle\sum_{j\neq i}\zeta_j\cs_j\eta(W''(\tilde{u}_j)-W''(0))+\zeta_i\cs_i\eta(W''(\tilde{u}_i)-W''(0))-\sigma+\displaystyle\sum_{j\neq i}O(\ep^{-2s}\tilde{u}_j^2)+O(\ep^{2s}).
\end{split}\eeqs

\noindent Next, we add and subtract the term $W''(0)\overline{\sigma}$ to get 
\beqs\begin{split}I_\ep&=\ep^{-2s}\displaystyle\sum_{j\neq i}(W''(\tilde{u}_i)-W''(0))\tilde{u}_j+(W''(\tilde{u}_i)-W''(0))\overline{\sigma}+\displaystyle\sum_{j\neq i}\zeta_j\cs_j(W''(\tilde{u}_j)-W''(\tilde{u}_i))\psi_j\\&
+\displaystyle\sum_{j\neq i}\zeta_j\cs_j\eta(W''(\tilde{u}_j)-W''(0))+\zeta_i\cs_i\eta(W''(\tilde{u}_i)-W''(0))+W''(0)\overline{\sigma}-\sigma\\&+\displaystyle\sum_{j\neq i}O(\ep^{-2s}\tilde{u}_j^2)+O(\ep^{2s}).
\end{split}\eeqs

\noindent Now, clearly $$\zeta_j\cs_j\eta(W''(\tilde{u}_j)-W''(0))=O(\tilde{u}_j),\quad W''(\tilde{u}_i)-W''(0)=O(\tilde{u}_i)$$ and $$\zeta_j\cs_j(W''(\tilde{u}_j)-W''(\tilde{u}_i))\psi_j=O(\psi_j).$$
Therefore, we conclude that
\beqs\begin{split}I_\ep&=O(\tilde{u}_i)(\ep^{-2s}\displaystyle\sum_{j\neq i}\tilde{u}_j+\overline{\sigma}+\zeta_i\cs_i\eta)+W''(0)\overline{\sigma}-\sigma\\&
+\displaystyle\sum_{j\neq i}\left(O(\psi_j)+O(\tilde{u}_j)+O(\ep^{-2s}\tilde{u}_j^2)\right)+O(\ep^{2s}).
\end{split}\eeqs 
By \eqref{barsigmaN}, we finally obtain  \eqref{ieplemN}.
\end{proof}

Let us now conclude the proof of Proposition \ref{vbarepsupers}. 
Recalling \eqref{vepansbarbisN}, it suffices to show that for any $x\in\R$ and $t<T$
\beq\label{ieppositiveN} I_\ep\geq 0\eeq
for $\delta$ and $\ep$ small enough.

\noindent\emph{Case 1.}
Suppose that there exists an index $i=1,...,N$ such that $x$ is close to $\xs_i(t)$ more than $\ep^\gamma$:
\beq\label{x-xs2leqeppow}|x-\xs_i(t)|\leq \ep^\gamma\quad\text{with }0<\gamma<\displaystyle\frac{\kappa-2s}{\kappa},\eeq
where $\kappa$ is given in Lemma \ref{uinfinitylem}.

Since the $\xs_j$'s are separated for $t<T$, we have for $j\neq i$
$$|x-\xs_j(t)|\geq |\xs_i(t)-\xs_j(t)|-|x-\xs_i(t)|\geq|\xs_i(t)-\xs_j(t)|- \ep^\gamma\geq\overline{\theta}>0,$$   for $\ep$ sufficiently  small, where  $\overline{\theta}$ is independent of $\ep$. Hence, from \eqref{uinfinity} and  \eqref{utildeN}, we get for $j\neq i$
\beqs\begin{split}&\left|\displaystyle\frac{\tilde{u}_j(t,x)}{\ep^{2s}}+\displaystyle\frac{\zeta_j}{2s W''(0)}\displaystyle\frac{x-\xs_j(t)}{|x-\xs_j(t)|^{1+2s}}\right|
\\&= \displaystyle\frac{1}{\ep^{2s}}\left|u\left(\zeta_j\displaystyle\frac{x-\xs_j(t)}{\ep}\right)-H\left(\zeta_j\displaystyle\frac{x-\xs_j(t)}{\ep}\right)+\zeta_j\displaystyle\frac{\ep^{2s}}{2s W''(0)}\displaystyle\frac{x-\xs_j(t)}{|x-\xs_j(t)|^{1+2s}}\right|\\&
\leq C\displaystyle\frac{\ep^\kappa}{\ep^{2s}}\displaystyle\frac{1}{|x-\xs_j(t)|^\kappa}\\&\leq C\ep^{\kappa-2s}.\end{split}\eeqs
Next, a Taylor expansion of the function $\displaystyle\frac{x-\xs_j(t)}{|x-\xs_j(t)|^{1+2s}}$ around $\xs_i(t)$, gives
\beqs\begin{split}\left|\displaystyle\frac{x-\xs_j(t)}{|x-\xs_j(t)|^{1+2s}}-\displaystyle\frac{\xs_i(t)-\xs_j(t)}{|\xs_i(t)-\xs_j(t)|^{1+2s}}\right|&\leq \displaystyle\frac{2s}{|\xi-\xs_j(t)|^{1+2s}}|x-\xs_i(t)|\leq C\ep^\gamma,\end{split}\eeqs
where $\xi$ is a suitable point lying on the segment joining $x$ to $\xs_i(t)$. 

The last two inequalities imply for $j\neq i$
\beqs\left|\displaystyle\frac{\tilde{u}_j}{\ep^{2s}}+\displaystyle\frac{\zeta_j}{2s W''(0)}\displaystyle\frac{\xs_i(t)-\xs_j(t)}{|\xs_i(t)-\xs_j(t)|^{1+2s}}\right|\leq C(\ep^\gamma+\ep^{\kappa-2s}).\eeqs

\noindent Therefore, from  \eqref{ieplemN}, we get that
\beq\label{iepsemifinalN}\begin{split} I_\ep&=O(\tilde{u}_i)\left(\displaystyle\sum_{j\neq i}\displaystyle\frac{-\zeta_j}{2s W''(0)}\displaystyle\frac{\xs_i(t)-\xs_j(t)}{|\xs_i(t)-\xs_j(t)|^{1+2s}}+\overline{\sigma}+\zeta_i\cs_i\eta\right)+\delta \\&
+\displaystyle\sum_{j\neq i}\left(O(\psi_j)+O(\tilde{u}_j)+O(\ep^{-2s}\tilde{u}_j^2)\right)+O(\ep^{2s})+O(\ep^\gamma)+O(\ep^{\kappa-2s}).
\end{split}\eeq

\noindent Now, we compute the term between parenthesis. From the definitions of $\cs_i$, $\eta$ and $\overline{\sigma}$ given respectively in \eqref{xbarpuntoN}, \eqref{eta} and 
\eqref{barsigmaN}, and the system of ODE's \eqref{dynamicalsysbarN},  we obtain
\beq\label{parenttermestiN}\begin{split}\displaystyle\sum_{j\neq i}\displaystyle\frac{-\zeta_j}{2s W''(0)}\displaystyle\frac{\xs_i(t)-\xs_j(t)}{|\xs_i(t)-\xs_j(t)|^{1+2s}}+\overline{\sigma}+\zeta_i\cs_i\eta&=\displaystyle\frac{\sigma(t,x)-\sigma(t,\xs_i(t))}{W''(0)}\\&
=O(|x-\xs_i(t)|)\\&=O(\ep^\gamma).\end{split}\eeq

\noindent Finally, from the estimates \eqref{uinfinity}  and the fact that $\displaystyle\lim_{|x|\rightarrow\pm\infty}\psi(x)=0$, we have for $j\neq i$
\beq\label{psiuuqestN}\tilde{u}_j,\,\ep^{-2s}\tilde{u}_j^2=O(\ep^{2s}),\quad\text{and}\quad \psi_j=O(1),\eeq as $\ep\rightarrow 0$.
From \eqref{iepsemifinalN}, \eqref{parenttermestiN} and \eqref{psiuuqestN}, we get that for $\ep$ small enough
$$I_\ep\geq\displaystyle\frac{\delta}{2},$$ which implies \eqref{ieppositiveN}.

\noindent\emph{Case 2.} Suppose that for  $i=1,...,N$ we have 
$$|x-\xs_i(t)|\geq \ep^\gamma.$$ In this case,  the estimate in \eqref{uinfinity} on $u$ implies for  $j=1,...,N$
\beqs\left|\displaystyle\frac{\tilde{u}_j}{\ep^{2s}}+\displaystyle\frac{\zeta_j}{2s W''(0)}\displaystyle\frac{x-\xs_j(t)}{|x-\xs_j(t)|^{1+2s}}\right|\leq C\displaystyle\frac{\ep^\kappa}{\ep^{2s}}\displaystyle\frac{1}{|x-\xs_j(t)|^\kappa}
\leq C\ep^{\kappa-2s-\gamma\kappa}.\eeqs
Moreover
$$\displaystyle\frac{1}{|x-\xs_j(t)|^{2s}}\leq \ep^{-2\gamma s}.$$

\noindent As a consequence, recalling \eqref{eta}, \eqref{barsigmaN} and \eqref{dynamicalsysbarN}
\beqs\begin{split} \ep^{-2s}\displaystyle\sum_{j\neq i}\tilde{u}_j+\overline{\sigma}+\zeta_i\cs_i\eta&=\displaystyle\sum_{j\neq i}\displaystyle\frac{\zeta_j}{2s W''(0)}\displaystyle\frac{x-\xs_j(t)}{|x-\xs_j(t)|^{1+2s}}+O(1)\\&
=O(\ep^{-2\gamma s}).
\end{split}\eeqs

\noindent Therefore, from  \eqref{ieplemN}, we have
\beqs I_\ep=\delta+O(\tilde{u}_i)O(\ep^{-2\gamma s})
+\displaystyle\sum_{j\neq i}\left(O(\psi_j)+O(\tilde{u}_j)+O(\ep^{-2s}\tilde{u}_j^2)\right)+O(\ep^{2s}).
\eeqs
Now, we observe that again from \eqref{uinfinity}, for $i=1,...,N$
$$\tilde{u}_i=O\left(\displaystyle\frac{\ep^{2s}}{|x-\xs_i|^{2s}}\right)=O\left(\displaystyle\frac{\ep^{2s}}{\ep^{2\gamma s}}\right)=O\left(\ep^{2s(1-\gamma)}\right).$$
 As a consequence
$$O(\ep^{-2s}\tilde{u}_j^2)=O\left(\ep^{2s(1-2\gamma)}\right),\quad\text{and}\quad O(\tilde{u}_i)O(\ep^{-2\gamma s})=O\left(\ep^{2s(1-2\gamma)}\right).$$
Again the asymptotic behavior of $\psi$ implies 
$$\psi_i=O(1).$$
We conclude that 
$$I_\ep=\delta+O(1).$$
Hence for  $\ep$ small enough, we have 
$$I_\ep\geq\displaystyle\frac{\delta}{2},$$ which again implies \eqref{ieppositiveN}.

\bigskip

\vfill


\begin{thebibliography}{10}


\bibitem{cs}{\sc X. Cabr\'{e} and Y. Sire}, Nonlinear equations for fractional Laplacians II: existence, uniqueness,
and qualitative properties of solutions, {\em Trans. Amer. Math. Soc.}, to appear.


\bibitem{csm}{\sc X. Cabr\'{e} and J. Sol\`{a}-Morales}, Layer
solutions in a half-space for boundary reactions, {\em Comm. Pure
Appl. Math.}, {\bf 58} (2005) no. 12, 1678-1732.


\bibitem{dfv}{\sc S. Dipierro, A. Figalli and E. Valdinoci},
Strongly nonlocal dislocation dynamics in crystals,  
{\em Comm.  Partial Differential Equations}, to appear.


\bibitem{dpv}{\sc S. Dipierro, G. Palatucci and E. Valdinoci}, Dislocation 
dynamics in crystals: a macroscopic
theory in a fractional Laplace setting, 
{\em Comm. Math. Phys.}, to appear.


\bibitem{dnpv}{\sc E. Di Nezza, G. Palatucci and E. Valdinoci}, Hitchhiker's guide to fractional Sobolev spaces,
{\em Bull. Sci. math.}, {\bf 136} (2012), no. 5, 521-573. 


\bibitem{FIM09} {\sc N. Forcadel, C. Imbert, R. Monneau},
Homogenization 
of some particle systems with two-body interactions and of the
dislocation dynamics, {\em Discrete Contin. Dyn. Syst.}, {\bf 23} (2009),
no.~3, 785-826.


\bibitem{gonzalezmonneau}{\sc M. Gonz\'{a}lez and R. Monneau},
Slow motion of particle systems as a limit of a reaction-diffusion
equation with half-Laplacian in dimension one, {\em Discrete Contin. Dyn. Syst.}, {\bf  32} (2012),
no. 4, 1255-1286.


\bibitem{Nab97} {\sc F. R. N. Nabarro},
Fifty-year study of the Peierls--Nabarro stress, {\em
Mat. Sci.  Eng. A} {\bf 234--236}~(1997), 67-76.


\bibitem{psv}{\sc G. Palatucci, O. Savin and  E. Valdinoci}, Local and global minimizers for a variational energy
involving a fractional norm.
{\em  Ann. Mat. Pura Appl.}, (4) {\bf 192} (2013), no. 4, 673-718.

\bibitem{s}{\sc L. Silvestre}, {\em Regularity of the obstacle problem for a fractional power of the Laplace operator}, PhD thesis, University of Texas at Austin (2005).

\end{thebibliography}
\end{document}